\newcommand{\rsdraw}[3]{\raisebox{-#1\height}{\scalebox{#2}{\includegraphics{#3.eps}}}}
\newtheorem{thm}{Theorem}
\newtheorem{lem}{Lemma}
\newtheorem{cor}{Corollary}
\newcommand{\co}{\colon}
\newcommand{\tens}{\otimes}
\newcommand{\id}{\mathrm{id}}
\newcommand{\fs}[1]{[#1]}
\newcommand{\mc}{\mathcal}
\newcommand{\opp}{\mathrm{op}}
\newcommand{\N}{\mathbb{N}}
\newcommand{\R}{\mathbb{R}}
\newcommand{\Hom}{\mathrm{Hom}}
\newcommand{\End}{\text{End}}
\newcommand{\Ob}[1]{\mathrm{Ob}(#1)}
\newcommand{\RSL}{\textbf{RSL}}
\newcommand{\uu}{\Bbb1}
\newcommand{\evl}{\text{ev}}
\newcommand{\evr}{\widetilde{\text{ev}}}
\newcommand{\coevl}{\text{coev}}
\newcommand{\coevr}{\widetilde{\text{coev}}}
\newcommand{\Reid}[1]{\textbf{R}#1}
\newcommand{\coend}{\mathbb{F}}
\patchcmd{\@setaddresses}{\indent}{\noindent}{}{}
\patchcmd{\@setaddresses}{\indent}{\noindent}{}{}
\patchcmd{\@setaddresses}{\indent}{\noindent}{}{}
\patchcmd{\@setaddresses}{\indent}{\noindent}{}{}
\newlength{\gnat}
\begin{document}
	\title{Cyclic sets from ribbon string links}
	\author{Ivan Bartulovi\'c}
	
	\address{Universit\'{e} de Lille, Laboratoire Paul Painlev\'{e}
		UMR CNRS 8524, F-59000 Lille, France}
	\email{ivan.bartulovic@univ-lille.fr}

	\begin{abstract} In this paper, we first endow the set of ribbon string links (up to isotopy) with a structure of a cyclic and of a cocyclic set. Next, we relate these (co)cyclic sets with those associated with the coend of a ribbon category. The relationship is given by the universal quantum invariants à la Reshetikhin-Turaev.
		
		\bigskip
		\noindent \emph{Keywords}.  Cyclic sets, string links, coend of a category.
		
		\bigskip
		\noindent 2020 \emph{Mathematics Subject Classification}. 18N50, 18M15, 57K16.
	\end{abstract}
	\maketitle
	\setcounter{tocdepth}{1}%'cause I don't want the subsections & co to appear in the Contents part
	\tableofcontents
	\section{Introduction}

A (co)cyclic object in a category is, roughly speaking, a (co)simplicial object with compatible actions of the cyclic groups. In particular, a (co)cyclic set/vector space is a (co)cyclic object in the category of sets/vector spaces. The first example arose in homological algebra: Connes~\cite{Connesext,connes_non-commutative_1985} associated to any algebra a cocyclic vector space whose cohomology is called the cyclic cohomology of the algebra.  Majid and Akrami \cite{cycliccocycles} gene\-ralized this construction by associating to any ribbon algebra (an algebra in a braided monoidal category equipped with a ribbon automorphism) a cocyclic vector space.

In this paper, we first endow the set of ribbon string links (which are framed long knots with several components) with the structure of a cyclic set and of a cocyclic set (see Theorem~\ref{slmain} and Section~\ref{SLL}). Next, we prove that these (co)cyclic sets universally ``dominate'' the (co)cyclic sets associated to the coends of ribbon categories (see Theorem~\ref{slmain2} and Corollary~\ref{corslmain2}).  To be more specific, consider the category~\textbf{RSL} of ribbon string links (where composition is given by stacking).  Fol\-lowing Bruguières-Virelizier~\cite{hopfdiag}, the quantum invariants à la Reshetikhin-Turaev associated to a ribbon category $\mc B$ give rise to a functor~$\phi_{\mc B}$ from~\textbf{RSL} to the  convolution category~$\text{Conv}_{\mc B}(\coend,\uu)$, where~$\coend$ is the coend of $\mc B$ (endowed with its canonical coalgebra structure) and $\uu$ is the monoidal unit of $\mc B$ (endowed with the trivial algebra structure). We show that the functor $\phi_{{\mc B}}$ induces a morphism of (co)cyclic sets from the (co)cyclic sets associated to ribbon string links to the (co)cyclic sets (à la Akrami-Majid \cite{cycliccocycles} and its variants, see Section \ref{algebraiccyclic}) associated with the braided Hopf algebra~$\coend$.

	The paper is organized as follows.
	In Section~\ref{cycprelimini}, we recall the notions of (co)simp\-li\-cial and (co)cyclic objects in a category. In Section~\ref{slrelatedobjects}, we construct (co)cyclic sets from ribbon string links.
	In Section~\ref{catprelimini}, we review ribbon categories and graphical calculus.
	Section~\ref{algebraiccyclic} is dedicated to (co)cyclic objects from categorical (co)algebras.
	In Section~\ref{relquantum}, we relate, via the quantum invariants, the (co)cyclic sets from string links to those associated to a coend of a ribbon category.
	Throughout the paper, the class of objects of a category~$\mc C$ is denoted by~$\Ob{\mc C}$.

\subsection*{Acknowledgments} This work was supported by the Labex CEMPI (ANR-11-LABX-0007-01), by the Région Hauts-de-France, and by the FNS-ANR OChoTop grant (ANR-18-CE93-0002-01). The author is particularly grateful for the valuable comments and advice of his thesis supervisor Alexis Virelizier.

	\section{Cyclic objects} \label{cycprelimini}
	In this section we review the cyclic category, which is an extension of the simplicial category and is used to define (co)cyclic objects in a category.
	
	\subsection{The simplicial category}
	The \emph{simplicial category}~$\Delta$ is defined as follows.
	The objects of~$\Delta$ are the non-negative integers~$n \in \N$. For $n\in \N$, consider the ordered sets~$\fs{n}=\{0,\dots,n\}$.
	A morphism from~$n$ to $m$ in~$\Delta$ is an increasing map~$\fs{n}\to\fs{m}$.
	For~$n\in \N^*$ and~$0\le i \le n$, the $i$-th \emph{coface}~$\delta_i^n \co (n-1) \to n$ is the unique increasing injection from~$\fs{n-1}$ into~$\fs{n}$ which misses~$i$.
	For~$n\in \N$ and~$0\le j \le n$, the~$j$-th~\emph{codegene\-racy}~$\sigma_j^n \co (n+1) \to n$ is the unique increasing surjection from~$\fs{n+1}$ onto~$\fs{n}$ which sends both~$j$ and~$j+1$ to~$j$.
	
	It is well known (see~\cite[Lemma 5.1]{MCLHomology}) that morphisms in~$\Delta$ are generated by the cofaces~$\{\delta_i^n\}_{n\in \N^*, 0\leq i \leq n}$ and the codegeneracies~$\{\sigma_j^n\}_{n\in \N, 0\leq j \leq n}$ subject to the following three \textit{simplicial relations}:
	\begin{align}
	\label{cofaces} \delta_j^{n+1} \delta_i^n &= \delta_i^{n+1} \delta_{j-1}^n \text{\quad for all }0 \le i<j \le n+1, \\
	\label{codegeneracies}  \sigma_j^{n}\sigma_i^{n+1}&=\sigma_i^n\sigma_{j+1}^{n+1} \text{\quad for all } 0 \le i \leq j \le n,
	\\
	\label{compcofcod}\sigma_j^n\delta_i^{n+1}&= \begin{cases}
	\delta_i^n \sigma_{j-1}^{n-1} & \text{for all } 0 \le i<j \le n, \\
	\id_{n} & \text{for } 0\le i=j \le n \text{ or } 1\le i=j+1 \le n+1,\\
	\delta_{i-1}^n\sigma_j^{n-1} & \text{for all } 1\le j+1<i\le n+1.
	\end{cases}
	\end{align}
	In the op\-po\-site ca\-te\-go\-ry~$\Delta^{\opp}$, the co\-face~$\delta_i^n$ and the co\-de\-ge\-ne\-ra\-cy~$\sigma_j^n$ are re\-spec\-tively de\-noted by
	\[
	d_i^n \co n \to (n-1) \quad \text{and} \quad s_j^n \co n \to (n+1).
	\]
	The morphisms $\{d_i^n\}_{n\in \N^*, 0\le i \le n}$ are called the \textit{faces} and the morphisms $\{s_j^n\}_{n\in \N, 0 \le j \le n}$ are called the \textit{degeneracies}.
	
	\subsection{The cyclic category}
	The~\emph{cyclic category}~$\Delta C$ (introduced by Connes \cite{Connesext}) is defined as follows. The objects of~$\Delta C$ are the non-negative integers~$n\in \N$.
	The morphisms are generated by the morphisms~$\{\delta_i^n\}_{n\in \N^*, 0\le i \le n}$, called \textit{cofaces}, the morphisms~$\{\sigma_j^n\}_{n\in \N, 0\le j \le n}$, called \textit{codegeneracies}, and the isomorphisms ~$\{\tau_n\co n \to n\}_{n\in \N}$, called~\emph{cocyclic operators}, satisfying the simplicial relations~{\eqref{cofaces}-\eqref{compcofcod}}, the following four \textit{compatibility relations}:
	\begin{align}
	\label{compcoccof}&\tau_n \delta_i^n = \delta_{i-1}^n \tau_{n-1} \text{\quad for all } 1 \leq i \leq n, \\
	\label{tndelta0}&\tau_n\delta_0^n=\delta_n^n, \text{\quad for all } n \ge 1, \\
	\label{compcoccod}&\tau_n\sigma_i^n=\sigma_{i-1}^n\tau_{n+1} \text{\quad for all } 1 \leq i \leq n,\\
	\label{tnsigma0}&\tau_n\sigma_0^n=\sigma_n^n\tau_{n+1}^2, \text{\quad for all } n \ge 0,
	\end{align}
	and the \textit{cocyclicity condition}
	\begin{equation}\label{cocyclicity}
	\tau_n ^{n+1} = \id_{n} \quad \text{for all} \quad n \in \N.
	\end{equation}
	In the op\-po\-site ca\-te\-go\-ry~$\Delta C^{\opp}$, the co\-face~$\delta_i^n$, the co\-de\-ge\-ne\-ra\-cy~$\sigma_j^n$, and the cocyclic ope\-rator $\tau_n$ are respectively de\-noted by
	\[
	d_i^n \co n \to (n-1), \quad s_j^n \co n \to (n+1), \quad \text{and} \quad t_n \co n \to n.
	\]
	The morphisms $\{d_i^n\}_{n\in \N^*, 0\le i \le n}$ are called the \textit{faces}, the morphisms $\{s_j^n\}_{n\in \N, 0 \le j \le n}$ are called the \textit{degeneracies}, and the morphisms $\{t_n\}_{n\in \N}$ are called the \textit{cyclic operators}.

	\subsection{(Co)simplicial and (co)cyclic objects in a category} \label{cycobj}
	Let~$\mc C$ be a category.
	A~\textit{simplicial object} in~$\mc C$ is a functor~$\Delta^\opp \to \mc C$. A~\emph{cyclic object} in~$\mc C$ is a functor~$\Delta C^{\opp} \to \mc C.$
	Dually, a~\emph{cosimplicial object} in~$\mc{C}$ is a functor~$\Delta \to \mc C$ and a~\emph{cocyclic object} in~$\mc C$ is a functor~$\Delta C \to \mc C$. A \textit{morphism} between two (co)simplicial/(co)cyclic objects is a natu\-ral transformation between them. A (co)simplicial/(co)cyclic object in the cate\-gory of sets are called \textit{(co)simplicial/(co)cyclic sets}.
	
	Since the categories~$\Delta$ and $\Delta C$ are defined by gene\-rators and relations, a (co)sim\-pli\-cial/(co)cyclic object in a category is completely determined by the ima\-ges of the gene\-rators satisfying the corresponding relations. As usual, we denote these images by the same letter. For example, a cocyclic object~$X$ in~$\mc C$ may be described explicitly by a fami\-ly~$X^\bullet=\{X_n\}_{n\in \N}$ of objects in~$\mc C$ and by morphisms~$\{\delta_i^n\co X^{n-1} \to X^{n}\}_{n\in \N^*,0\leq i \leq n}$, called~\textit{cofaces}, morphisms~$\{\sigma_j^n\co X^{n+1} \to X^n\}_{n\in \N, 0 \leq j \leq n}$, called~\textit{codegeneracies}, and isomorphisms~$\{\tau_n\co X^n \to X^n\}_{n\in \N}$, called~\textit{cocyclic operators}, which satisfy~\eqref{cofaces}-\eqref{cocyclicity}.
	From this point of view, a morphism~$\alpha^\bullet \co X^\bullet \to Y^\bullet$ between cocyclic objects~$X^\bullet$ and~$Y^\bullet$ in~$\mc C$ is described by a fami\-ly~$\alpha^\bullet=\{\alpha^n\co X^n \to Y^n\}_{n\in \N}$ of morphisms in~$\mc C$ such that
	\begin{align*}
	\delta_i^n\alpha^{n-1}&= \alpha^n\delta_i^n \quad \text{for all } n\ge 1 \text{ and } 0\leq i \leq n, \\
	\sigma_j^n \alpha^{n+1}&= \alpha^n\sigma_j^n \quad \text{for all } n\ge 0 \text{ and } 0\leq j \leq n, \\
	\alpha^n\tau_n&=\tau_n\alpha^n \quad \text{for all } n \ge 0.
	\end{align*}
Similarly as above, a cyclic object $X$ in $\mc C$ is described by a family $X_\bullet=\{X_n\}_{n\in \N}$ of objects in $\mc C$ and by morphisms~$\{d_i^n\co X_n \to X_{n-1}\}_{n\in \N^*,0\leq i \leq n}$, called~\textit{faces}, morphisms~$\{s_j^n\co X_n \to X_{n+1}\}_{n\in \N, 0\leq j \leq n}$, called~\textit{degeneracies}, and isomorphisms~$\{t_n\co X_n \to X_n\}_{n\in \N}$, called \textit{cyclic operators}, which satisfy the relations
	\begin{align}
	\label{faces} d_i^nd_j^{n+1} &= d_{j-1}^nd_i^{n+1} \text{\quad for all } 0\le i<j \le n+1, \\
	\label{degeneracies}s_i^{n+1}s_j^n&=s_{j+1}^{n+1}s_i^n \text{\quad for all } 0\le i \leq j\le n, \\
	\label{compfacdeg} d_i^{n+1}s_j^n&= \begin{cases}
	s_{j-1}^{n-1}d_i^n & \text{for all } 0\le i<j \le n, \\
	\id_{X_n} & \text{for } 0\le i=j \le n \text{ or } 1\le i=j+1 \le n+1,\\
	s_j^{n-1}d_{i-1}^n & \text{for all } 1\le j+1<i\le n+1,
	\end{cases} \\
	\label{compcycfac} d_i^nt_n &= t_{n-1}d_{i-1}^n \text{\quad for all } 1 \leq i \leq n,\\
	\label{d0tn} d_0^nt_n&=d_n^n \text{\quad for all } n\ge 1, \\
	\label{compcycdeg}s_i^nt_n&=t_{n+1}s_{i-1}^n \text{\quad for all } 1 \leq i \leq n,\\
	\label{s0tn} s_0^nt_n&=t_{n+1}^2s_n^n \text{\quad for all } n\ge 0, \hspace{0.7cm}
	\end{align}
	and such that the~\textit{cyclicity condition} holds for any~$n\in \N$:
	\begin{equation}\label{cyclicity}
	t_n ^{n+1} = \id_{X_n}.
	\end{equation}
	In this characterization, a morphism $\alpha_\bullet \co X_\bullet \to Y_\bullet$ between two cyclic objects~$X_\bullet$ and~$Y_\bullet$ in~$\mc C$ is described by a family~$\alpha_\bullet=\{\alpha_n\co X_n \to Y_n\}_{n\in \N}$ of morphisms in~$\mc C$ satisfying
	\begin{align*}
	\alpha_{n-1}d_i^n&=d_i^n\alpha_n \quad \text{for all } n\ge 1 \text{ and } 0\leq i \leq n, \\
	\alpha_{n+1}s_j^n&=s_j^n\alpha_n \quad \text{for all } n\ge 0 \text{ and } 0\leq j \leq n, \\
	\alpha_nt_n&=t_n\alpha_n \quad \text{for all }  n\ge 0.
	\end{align*}
	
	\subsection{Cyclic duality} \label{connes loday dual}
	
	Connes defined in \cite{Connesext} an isomorphism of categories~$\Delta C \cong \Delta C^\opp$ called \textit{cyclic duality}.
	This cyclic duality~$L \co \Delta C^\opp \to \Delta C$ (in its version due to Loday \cite[Chapter 6]{loday98}) is identity on objects and it is defined on morphisms as follows.
	For~$n\geq 1$ and~$0\leq i \leq n$,
	\[L(d_i^n)=
	\begin{cases}
	\sigma_i^{n-1} & \text{if } 0\leq i \leq n-1, \\
	\sigma_0^{n-1}\tau_{n}^{-1} &  \text{if } i=n,
	\end{cases}\]
	and for $n\geq 0$ and $0\leq j \leq n$,
	\[L(s_j^n)=\delta_{j+1}^{n+1} \quad \text{and} \quad L(t_n)=\tau_n^{-1}.\]
	Given a category $\mc C$, the cyclic duality transforms a cocyclic object $X \co \Delta C \to \mc C$ into the cyclic object $XL \co \Delta C^\opp \to \mc C$ and its opposite $L^\opp$ transforms a cyclic object $Y \co \Delta C^\opp \to \mc C$ into the cocyclic object $Y L^\opp \co \Delta C \to \mc C$.

	\section{Cyclic and cocyclic sets from ribbon string links}\label{slrelatedobjects}
	In this section, we construct (co)cyclic sets from ribbon string links.
	
	\subsection{Ribbon string links}\label{sldef}
	
	A~\textit{ribbon} is a homeomorphic image of the rectangle~$[0,1]\times [0,1]$.
	The image of the segment~$[0,1]\times \{0\}$ is called the~\textit{bottom base} and the image of the segment~$[0,1]\times \{1\}$ is called the~\textit{top base} of the ribbon.
	The image of the segment~$\{\frac{1}{2}\}\times [0,1]$ is called the~\textit{core} of the ribbon.
	Let~$n$ be a non-negative integer. A \textit{ribbon~$n$-string link} is an oriented
	surface~$T$ embedded in the strip~$\R^2\times [0, 1]$ and decomposed into a disjoint union of $n$ ribbons such that~$T$ meets the planes~$\R^2\times \{0\}$ and~$\R^2\times \{1\}$ orthogonally as follows. For all~$1 \le k \le n$, the bottom base and the top base of the~$k$-th ribbon of~$T$ are respectively the segments $\left[k-\frac{1}{4}, k+\frac{1}{4}\right]\times \{0\} \times \{0\}$ and~$\left[k-\frac{1}{4}, k+\frac{1}{4}\right]\times \{0\} \times \{1\}$, and in the points of these segments, the orientation of $T$ is determined by the pair of vectors~$(1, 0, 0)$ and~$(0, 0, 1)$ tangent to $T$.

Note that there is a unique ribbon $0$-string link, which is the empty set.
 By an \textit{isotopy of ribbon string links}, we mean isotopy in~$\R^2 \times [0, 1]$ constant on the boundary and preserving the splitting into ribbons as well as the orientation of the surface~$T$.

We represent a ribbon string link~$T$ by a plane diagram with the \textit{blackboard framing} convention: the ribbons of~$T$ should go close and parallel to the plane~$\R \times \{0\} \times \R$ and the orientation of~$T$ corresponds to the counterclockwise orientation in~$\R \times \{0\} \times \R$.
We represent then~$T$ by the projection of the cores of its ribbons onto the plane~$\R \times \{0\} \times \R$ so that there are only double
transversal crossings (with overcrossing and undercrossing information).
For example,
$$\rsdraw{0.4}{0.44}{ribbon}\quad  \text{is isotopic to} \quad  \rsdraw{0.4}{0.44}{ribbon2}, \quad \text{and is then represented by the diagram} \quad  \rsdraw{0.4}{0.88}{exsl2}.$$

	Any diagram defines a ribbon string link (up to isotopy). Two planar diagrams represent isotopic string links if and only if they are related by a finite sequence of planar isotopies fixing the bases and the following ribbon Reidemeister moves:
	$$
	\settowidth{\gnat}{\rsdraw{0.5}{1}{RI1}\;$\leftrightarrow$\; \rsdraw{0.5}{1}{RI3}\;$\leftrightarrow$\rsdraw{0.5}{1}{RI2}\;,}\begin{minipage}[c]{\gnat}
	\begin{center} \rsdraw{0.5}{1}{RI1}\;$\leftrightarrow$\; \rsdraw{0.5}{1}{RI3}\;$\leftrightarrow$\rsdraw{0.5}{1}{RI2}\;, $$\Reid{1}$$
	\end{center}
	\end{minipage} \qquad \settowidth{\gnat}{\rsdraw{0.5}{1}{RII1}\;$\leftrightarrow$ \rsdraw{0.5}{1}{RII2}\;$\leftrightarrow$\rsdraw{0.5}{1}{RII3}\;,}\begin{minipage}[c]{\gnat}
	\begin{center} \rsdraw{0.5}{1}{RII1}\;$\leftrightarrow$ \rsdraw{0.5}{1}{RII2}\;$\leftrightarrow$\rsdraw{0.5}{1}{RII3}\;, $$\Reid{2}$$
	\end{center}
	\end{minipage} \qquad \settowidth{\gnat}{\rsdraw{0.5}{1}{RIII1}\;$\leftrightarrow$ \rsdraw{0.5}{1}{RIII2}\;.}\begin{minipage}[c]{\gnat}
	\begin{center} \rsdraw{0.5}{1}{RIII1}\;$\leftrightarrow$ \rsdraw{0.5}{1}{RIII2}\;. $$\Reid{3}$$
	\end{center}
	\end{minipage}$$

	The category~$\RSL$ of ribbon string links has as objects non-negative integers.
	For any two non-negative integers $m$ and $n$, the set of morphisms from $m$ to $n$ is defined by  \[
	\Hom_{\RSL}(m,n)=
	\begin{cases}
	\text{isotopy classes of ribbon $n$-string links} \quad &\text{if } m=n, \\
	\emptyset \quad& \text{if } m\neq n.
	\end{cases}
	\]
	The composition $T'\circ T$ of two ribbon~$n$-string links is given by stacking~$T'$ on the
	top of~$T$ (i.e., with ascending convention) and compressing the result into $\R^2\times \left[0,1\right]$:
	$$
	T' \circ T =
	\,
	\psfrag{T}[cc][cc][0.85]{$T$}
	\psfrag{S}[cc][cc][0.85]{$T'$}
	\rsdraw{0.45}{1}{RSLcompo}\;.
	$$
Identity of $n$ is the trivial ribbon $n$-string link
$$\id_n= 	\,
\psfrag{b}{\hspace{-0.07cm}$1$}
\psfrag{e}{$n$}
\rsdraw{0.55}{1}{trivial}\; $$
As above, we number the ribbons of a ribbon string link from the left to the right.
	For any~$n\in \N$, we denote by~$\mc{RSL}_n$ the monoid $\End_{\textbf{RSL}}(n+1)$ of the isotopy classes of ribbon~$(n+1)$-string links.
	
	\subsection{(Co)cyclic sets from ribbon string links}
	For any~$n\in \N$, define~$\mc{SL}^n=\mc{RSL}_n$ as a set. Next, define the cofaces~$\{\delta_i^n \co \mc{SL}^{n-1} \to \mc{SL}^n\}_{n\in \N^*, 0\le i \le n}$, the codegeneracies $\{\sigma_j^n \co \mc{SL}^{n+1}\to \mc{SL}^n\}_{n\in \N, 0\le j \le n}$, and the cocyclic operators~$\{\tau_n\co \mc{SL}^n \to \mc{SL}^n\}_{n\in \N}$ by setting
	\begin{gather*}
	\delta_0^n(T)=\,
	\psfrag{b}[cc][cc][0.75]{$1$}
	\psfrag{e}[cc][cc][0.75]{$n$}
	\psfrag{T}[cc][cc][0.85]{$T$}
	\rsdraw{0.55}{1}{SLdelta0} \;, \quad \delta_i^n(T)=\,
	\psfrag{b}[cc][cc][0.75]{$1$}
	\psfrag{i}[cc][cc][0.75]{\hspace{0.3cm}$i$}
	\psfrag{i+1}[cc][cc][0.75]{\hspace{0.3cm}$i+1$}
	\psfrag{e}[cc][cc][0.75]{$n$}
	\psfrag{T}[cc][cc][0.85]{$T$}
	\rsdraw{0.55}{1}{SLdeltai}\;, \quad \delta_n^n(T) = \,
	\psfrag{b}[cc][cc][0.75]{$1$}
	\psfrag{e}[cc][cc][0.75]{$n$}
	\psfrag{T}[cc][cc][0.85]{$T$}
	\rsdraw{0.55}{1}{SLdeltan}\;, \\ \sigma_j^n(T)=\,
	\psfrag{b}[cc][cc][0.75]{$0$}
	\psfrag{j}[cc][cc][0.75]{$j$}
	\psfrag{j+1}[cc][cc][0.75]{\hspace{0.3cm}$j+1$}
	\psfrag{e}[cc][cc][0.75]{\hspace{0.4cm}$n+1$}
	\psfrag{T}[cc][cc][0.85]{$T$}
	\rsdraw{0.55}{1}{SLsigmaj}\;, \quad \tau_0(T)=T, \quad \tau_n(T)=\,
	\psfrag{b}[cc][cc][0.75]{$0$}
	\psfrag{pe}[cc][cc][0.75]{$n-1$}
	\psfrag{e}[cc][cc][0.75]{$n$}
	\psfrag{T}[cc][cc][0.75]{$T$}
	\rsdraw{0.55}{1}{SLcyc}
	\;.
	\end{gather*}
	The string link $\delta_i^n(T)$ is obtained from $T$ by inserting from behind a trivial component between the $i$-th and $(i+1)$-th component. The string link~$\sigma_j^n(T)$ is obtained from~$T$ by connecting from behind the~$(j+1)$-th and~$(j+2)$-th component.
	
	Similarly as above, for any $n\in \N$, define  $\mc{SL}_n=\mc{RSL}_n$ as a set.
	Next, define the faces $\{d_i^n \co \mc{SL}_{n}  \to \mc{SL}_{n-1} \}_{n\in \N^*, 0\le i \le n}$, the degeneracies $\{s_j^n\co \mc{SL}_n\to \mc{SL}_{n+1}\}_{n\in \N, 0\le j \le n}$, and the cyclic operators $\{t_n\co \mc{SL}_n\to \mc{SL}_n\}_{n\in \N}$ by setting
	\begin{gather*}
	d_i^n(T)=
	\,
	\psfrag{b}[cc][cc][0.75]{$0$}
	\psfrag{i}[cc][cc][0.75]{$i$}
	\psfrag{e}[cc][cc][0.75]{$n$}
	\psfrag{T}[cc][cc][0.85]{$T$}
	\rsdraw{0.55}{1}{SLdeletion}\;, \hspace{0.25cm}  s_j^n(T)=\,
	\psfrag{b}[cc][cc][0.75]{$0$}
	\psfrag{j}[cc][cc][0.75]{\hspace{0.25cm}$j$}
	\psfrag{e}[cc][cc][0.75]{$n$}
	\psfrag{T}[cc][cc][0.85]{$T$}
	\rsdraw{0.55}{1}{SLduplication}\;, \hspace{0.25cm}
	t_0(T)=T, \hspace{0.25cm} t_n (T)=
	\,
	\psfrag{b}[cc][cc][0.75]{$0$}
	\psfrag{nb}[cc][cc][0.75]{\hspace{-0.4cm}$1$}
	\psfrag{e}[cc][cc][0.75]{\hspace{0.12cm}$n$}
	\psfrag{T}[cc][cc][0.85]{$T$}
	\rsdraw{0.55}{1}{SLcycneg}
	\;.
	\end{gather*}
	The string link~$d_i^n(T)$ is obtained from~$T$ by deleting the~$(i+1)$-th component. The string link~$s_j^n(T)$ is obtained from~$T$ by duplicating, along the framing, the~$(j+1)$-th component. Note that the removal and duplication operations for string links appeared in the work of Habiro~\cite{habiro2006bottom}.
	
	\begin{thm} \label{slmain}
		\begin{itemize}
			\item[(a)] The fa\-mi\-ly~$\mc{SL}^\bullet=\{\mc{SL}^n\}_{n\in \N}$ endowed with the co\-fa\-ces~$\{\delta_i^n \}_{n\in \N^*, 0\le i \le n}$, the co\-de\-ge\-ne\-ra\-cies~$\{\sigma_j^n\}_{n\in \N, 0\le j \le n}$ and the co\-cyc\-lic o\-pe\-ra\-tors~$\{\tau_n\}_{n\in \N}$
			is a cocyclic set.
			\item[(b)] The fa\-mi\-ly~$\mc{SL}_\bullet=\{\mc{SL}_n\}_{n\in \N}$ en\-dowed with the fa\-ces~$\{d_i^n\}_{n \in \N^*, 0\le i \le n}$, the de\-ge\-ne\-ra\-cies~$\{s_j^n\}_{n\in \N, 0\le j \le n}$ and the cyc\-lic o\-pe\-ra\-tors~$\{t_n\}_{n\in \N}$ is a cyc\-lic set.
		\end{itemize}
	\end{thm}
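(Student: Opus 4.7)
Both parts of the theorem amount to a finite list of pictorial identities between isotopy classes of ribbon string links, obtained by applying the defining diagrams of the structure maps to a generic element $T \in \mc{RSL}_n$. The plan is to verify each of the relations \eqref{cofaces}--\eqref{cocyclicity} for (a) and \eqref{faces}--\eqref{cyclicity} for (b) directly, using planar isotopies that fix the bases together with the ribbon Reidemeister moves \Reid{1}--\Reid{3}. Since $\mc{SL}^n = \mc{SL}_n = \mc{RSL}_n$ as sets, the data for the two parts are parallel and the diagrammatic arguments share the same flavor.

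For part (a), I would first dispatch the simplicial relations \eqref{cofaces}--\eqref{compcofcod}. Relation \eqref{cofaces} expresses that two trivial strands inserted at distinct positions can be inserted in either order after the standard index shift; this is a planar isotopy since the two inserted strands meet neither each other nor $T$. Relation \eqref{codegeneracies} is the analogous commutation statement for the ``connect from behind'' operation $\sigma_j^n$ on disjoint pairs of consecutive ribbons. Relation \eqref{compcofcod} splits into the same three cases as in the abstract simplicial category: the mixed case is again a disjoint-operation commutation, while the identity cases $i=j$ and $i=j+1$ are visual tautologies, since inserting a trivial component next to its future merge partner and then merging yields, after a planar isotopy, the original string link $T$. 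I would then turn to the compatibilities \eqref{compcoccof}--\eqref{tnsigma0} between $\tau_n$ and the cofaces/codegeneracies; these follow from the observation that cycling commutes with insertions and mergers up to an index shift, with \eqref{tndelta0} and \eqref{tnsigma0} recording the boundary instances where the cyclic operator converts a left-hand operation into a right-hand one.

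The main obstacle is the cocyclicity condition \eqref{cocyclicity}, $\tau_n^{n+1} = \id_n$. Iterating the defining move of $\tau_n$ produces a diagram in which the entire bundle of $n+1$ ribbons has been carried, from behind, through one full cyclic rotation around the column carrying $T$. To identify this diagram with $T$, I would use \Reid{2} and \Reid{3} to unbraid the bundle back to its natural order, and the ribbon version of \Reid{1} to cancel the framing twists generated by the rotation; because the rotation is performed uniformly on the whole bundle and because the endpoints of the strands are pinned to their prescribed positions with the prescribed tangent frame, the total framing contribution vanishes and the resulting diagram is isotopic to $T$. Part (b) is handled by the same pattern: the deletion/duplication analogues of the simplicial relations \eqref{faces}--\eqref{compfacdeg} are direct diagrammatic checks (with the identity cases reflecting that deleting a duplicate recovers the original), the cyclic compatibilities \eqref{compcycfac}--\eqref{s0tn} are pictorial translations of the cocyclic ones, and the cyclicity condition \eqref{cyclicity} is treated by the same framing-cancellation argument as \eqref{cocyclicity}.
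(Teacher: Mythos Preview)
Your approach is essentially the same as the paper's: a direct case-by-case verification of the relations \eqref{cofaces}--\eqref{cocyclicity} (resp.\ \eqref{faces}--\eqref{cyclicity}) via planar isotopy and the ribbon Reidemeister moves, with part~(b) left as the mirror image of part~(a).

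One organizational difference is worth noting. You propose to verify the boundary compatibilities \eqref{tndelta0} and \eqref{tnsigma0} directly, describing them as the instances ``where the cyclic operator converts a left-hand operation into a right-hand one.'' The paper instead invokes the abstract fact (Loday, \S6.1.1) that \eqref{tndelta0} and \eqref{tnsigma0} are formal consequences of the cocyclicity relation \eqref{cocyclicity} together with \eqref{compcoccof} and \eqref{compcoccod}, so it suffices to check the latter three. Your route is perfectly viable, but be aware that \eqref{tnsigma0} involves $\tau_{n+1}^2$ rather than a single application of the cyclic operator, so the direct pictorial check there is a bit more involved than your one-line gloss suggests; the paper's shortcut avoids this. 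Conversely, your approach has the advantage of being self-contained and not appealing to an external reference.

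For the cocyclicity relation \eqref{cocyclicity} itself, your description is correct in spirit but somewhat imprecise about why ``the total framing contribution vanishes.'' The paper's argument (shown for $n=1$) makes this explicit: one inserts on each component a cancelling pair of positive and negative twists, uses their naturality to slide them past the box $T$, and then applies \Reid{2} and \Reid{3} to undo the global rotation. You should be prepared to exhibit this twist-cancellation step concretely rather than appeal to a general framing-count heuristic.
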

	We prove Theorem \ref{slmain} in Section \ref{proofslmain}.

	\subsection{Cyclic duals}
	\label{SLL}
	
 	By precomposing the cyclic duality~$L\co \Delta C^\opp \to \Delta C$ from Section~\ref{connes loday dual} with the cocyclic set~${\mc {SL}}^\bullet$ from Theorem~\ref{slmain}$(a)$, we obtain the cyclic set~${\mc {SL}}^\bullet\circ L$.
	By definitions,~$\mc{SL}^\bullet\circ L(n)= \mc{SL}^n= \mc{RSL}_n$ for all~$n\in \N$.
	The faces $\{\tilde{d}_i\co \mc{RSL}_n \to \mc{RSL}_{n-1}\}_{n\in \N^*, 0 \le i \le n}$, the degeneracies $\{\tilde{s}_j^n\co \mc{RSL}_n\to \mc{RSL}_{n+1}\}_{n\in \N, 0\le j \le n}$, and the cyclic operators $\{\tilde{t}_n\co \mc{RSL}_n\to \mc{RSL}_n\}_{n\in \N}$ of the cyclic set~${\mc {SL}}^\bullet\circ L$ are computed by
	\begin{gather*}
	\tilde{d}_i^n(T)=\,
	\psfrag{b}[cc][cc][0.75]{$0$}
	\psfrag{j}[cc][cc][0.75]{$i$}
	\psfrag{j+1}[cc][cc][0.75]{\hspace{0.5cm}$i+1$}
	\psfrag{e}[cc][cc][0.75]{\hspace{0.5cm}$n$}
	\psfrag{T}[cc][cc][0.85]{$T$}
	\rsdraw{0.55}{1}{SLsigmaj2} \;, \quad
	\tilde{d}_n^n(T)=\,
	\psfrag{b}[cc][cc][0.75]{\hspace{-0.2cm}$0$}
	\psfrag{i}[cc][cc][0.75]{$1$}
	\psfrag{be}[cc][cc][0.75]{\hspace{-0.75cm}$n-1$}
	\psfrag{e}[cc][cc][0.75]{$n$}
	\psfrag{T}[cc][cc][0.85]{$T$}
	\rsdraw{0.55}{1}{dualSLdn} \;, \quad  \tilde{s}_j^n(T) =
	\,
	\psfrag{b}[cc][cc][0.75]{$0$}
	\psfrag{i}[cc][cc][0.75]{\hspace{0.25cm}$j$}
	\psfrag{i+1}[cc][cc][0.75]{\hspace{0.2cm}$j+1$}
	\psfrag{e}[cc][cc][0.75]{$n$}
	\psfrag{T}[cc][cc][0.85]{$T$}
	\rsdraw{0.55}{1}{SLdeltai2}\;, \\
	\tilde{s}_n^n(T) =
	\,
	\psfrag{b}[cc][cc][0.75]{$0$}
	\psfrag{e}[cc][cc][0.75]{$n$}
	\psfrag{T}[cc][cc][0.85]{$T$}
	\rsdraw{0.55}{1}{SLdeltan}\;, \quad \tilde{t}_0(T)=T, \quad
	\tilde{t}_n(T)=
	\,
	\psfrag{b}[cc][cc][0.75]{$0$}
	\psfrag{nb}[cc][cc][0.75]{\hspace{-0.3cm}$1$}
	\psfrag{e}[cc][cc][0.75]{\hspace{0.15cm}$n$}
	\psfrag{T}[cc][cc][0.85]{$T$}
	\rsdraw{0.55}{1}{SLcycneg}
	\;.
	\end{gather*}
	
	Similarly as above, by precomposing the functor~$L^\opp \co \Delta C \to \Delta C^\opp$ with the cyclic set~${\mc {SL}}_\bullet$ from Theorem  \ref{slmain}$(b)$, we obtain the cocyclic set~${\mc {SL}}_\bullet\circ L^\opp$. It follows by definitions that~$\mc{SL}_\bullet\circ L^\opp(n)=\mc{SL}_n=\mc{RSL}_n$ for all~$n\in \N$. The cofaces~$\{\tilde{\delta}_i^n\co \mc{RSL}_{n-1} \to \mc{RSL}_{n}\}_{n\in \N^*, 0 \le i \le n}$, the codegeneracies~$\{\tilde{\sigma}_j^n\co \mc{RSL}_{n+1} \to \mc{RSL}_{n}\}_{n\in \N, 0 \le j \le n}$, and the cocyclic operators $\{\tilde{\tau}_n\co \mc{RSL}_n\to \mc{RSL}_n\}_{n\in \N}$ of the cocyclic set~${\mc {SL}}_\bullet \circ L^{\opp}$ are computed by
	\begin{gather*}
	\tilde{\delta}_i^n(T)=
	\,
	\psfrag{b}[cc][cc][0.75]{$1$}
	\psfrag{j}[cc][cc][0.75]{\hspace{0.26cm}$i+1$}
	\psfrag{e}[cc][cc][0.75]{$n$}
	\psfrag{T}[cc][cc][0.85]{$T$}
	\rsdraw{0.55}{1}{SLduplication} \;, \quad
	\tilde{\delta}_n^n(T)=
	\,
	\psfrag{b}[cc][cc][0.75]{$1$}
	\psfrag{be}[cc][cc][0.75]{\hspace{0.5cm}$n-1$}
	\psfrag{e}[cc][cc][0.75]{$n$}
	\psfrag{T}[cc][cc][0.85]{$T$}
	\rsdraw{0.55}{1}{dualtSLdeltan} \;, \quad
	\tilde{\sigma}_j(T)=\,
	\psfrag{b}[cc][cc][0.75]{$0$}
	\psfrag{i}[cc][cc][0.75]{$j+1$}
	\psfrag{e}[cc][cc][0.75]{$n+1$}
	\psfrag{T}[cc][cc][0.85]{$T$}
	\rsdraw{0.55}{1}{SLdeletion2}\;, \\
	\tilde{\tau}_0(T)=T, \quad \tilde{\tau}_n(T)=
	\,
	\psfrag{b}[cc][cc][0.75]{$0$}
	\psfrag{pe}[cc][cc][0.75]{$n-1$}
	\psfrag{e}[cc][cc][0.75]{\hspace{0.14cm}$n$}
	\psfrag{T}[cc][cc][0.85]{$T$}
	\rsdraw{0.55}{1}{SLcyc}
	\;.
	\end{gather*}

	\subsection{Proof of Theorem \ref{slmain}} \label{proofslmain}
We only prove the part $(a)$ of the theorem, by checking the relations \eqref{cofaces}-\eqref{cocyclicity}. The proof of the part $(b)$ is similar and is left to the reader (one needs to verify relations~\eqref{faces}-\eqref{cyclicity}).

Let us prove the part $(a)$. We first verify~\eqref{cofaces}.
 If~$1\le i<j \le n-1$ and~$T\in \mc{RSL}_{n-1}$, then
	\begin{align*}
	\delta_j^{n+1}\delta_i^n(T)&\overset{(i)}{=}\delta_j^{n+1}\left(
	\,
	\psfrag{b}[cc][cc][0.75]{$1$}
	\psfrag{i}[cc][cc][0.75]{\hspace{0.3cm}$i$}
	\psfrag{i+1}[cc][cc][0.75]{\hspace{0.3cm}$i+1$}
	\psfrag{e}[cc][cc][0.75]{$n$}
	\psfrag{T}[cc][cc][0.85]{$T$}
	\rsdraw{0.55}{1}{SLdeltai}\;\right)
	\overset{(ii)}{=}
	\,
	\psfrag{b}[cc][cc][0.75]{$1$}
	\psfrag{i}[cc][cc][0.75]{$i$}
	\psfrag{i+1}[cc][cc][0.75]{\hspace{0.3cm}$i+1$}
	\psfrag{j}[cc][cc][0.75]{$j$}
	\psfrag{j-1}[cc][cc][0.75]{\hspace{0.4cm}$j-1$}
	\psfrag{e}[cc][cc][0.75]{$n$}
	\psfrag{T}[cc][cc][0.85]{$T$}
	\rsdraw{0.55}{1}{djdi}\;
	\\&\overset{(iii)}{=}\delta_i^{n+1}\left(
	\,
	\psfrag{b}[cc][cc][0.75]{$1$}
	\psfrag{i}[cc][cc][0.75]{$j-1$}
	\psfrag{i+1}[cc][cc][0.75]{\hspace{0.2cm}$j$}
	\psfrag{e}[cc][cc][0.75]{$n$}
	\psfrag{T}[cc][cc][0.85]{$T$}
	\rsdraw{0.55}{1}{SLdeltai}\;\right)
	\overset{(iv)}{=}\delta_i^{n+1}\delta_{j-1}^n(T).
	\end{align*}
	Here $(i)$, $(iii)$, $(iv)$ follow from the definition and $(ii)$ follows from the definition and the hypothesis that $i<j$.
	Indeed, since we count the unlabeled trivial component, which is inserted between the components labeled by $i$ and $i+1$, the $j$-th component of $\delta_i^n(T)$ is the one labeled by $j-1$ on the string link $T$.
	The cases $i=0<j\le n+1$ and $i=n, j=n+1$ are checked in a similar way.
	
	Next, we verify~\eqref{codegeneracies}.
	Let~$n\ge 0$.
	If~$i<j$ and~$T\in \mc{RSL}_{n+2}$, then
	\begin{align*}
	\sigma_j^n\sigma_i^{n+1}(T)&\overset{(i)}{=}\sigma_j^n\left(\,
	\psfrag{b}[cc][cc][0.75]{$0$}
	\psfrag{j}[cc][cc][0.75]{$i$}
	\psfrag{j+1}[cc][cc][0.75]{\hspace{0.5cm}$i+1$}
	\psfrag{e}[cc][cc][0.75]{\hspace{0.5cm}$n+2$}
	\psfrag{T}{$T$}
	\rsdraw{0.55}{1}{SLsigmajv3}\;\quad\right)\overset{(ii)}{=}
	\,
	\psfrag{b}[cc][cc][0.75]{$0$}
	\psfrag{i}[cc][cc][0.75]{$i$}
	\psfrag{i+1}[cc][cc][0.75]{\hspace{0.5cm}$i+1$}
	\psfrag{j+1}[cc][cc][0.75]{$j+1$}
	\psfrag{j+2}[cc][cc][0.75]{\hspace{0.7cm}$j+2$}
	\psfrag{e}[cc][cc][0.75]{$n+2$}
	\psfrag{T}{$T$}
	\rsdraw{0.45}{1}{sjsi}\; \\
	&\overset{(iii)}{=}\sigma_{i}^n\left(\,
	\psfrag{b}[cc][cc][0.75]{$0$}
	\psfrag{j}[cc][cc][0.75]{\hspace{0.3cm}$j+1$}
	\psfrag{j+1}[cc][cc][0.75]{\hspace{0.6cm}$j+2$}
	\psfrag{e}[cc][cc][0.75]{\hspace{0.6cm}$n+2$}
	\psfrag{T}[cc][cc][0.85]{$T$}
	\rsdraw{0.45}{1}{SLsigmajv2}\;\quad\right)\overset{(iv)}{=}\sigma_i^n\sigma_{j+1}^{n+1}(T).
	\end{align*}
	Here $(i),(iii),(iv)$ follow from the definition and $(ii)$ follows from the definition and the hypothesis that $i<j$.
	Indeed, since one concatenates the components labeled by~$i$ and~$i+1$, the~$(j+1)$-th component of~$\sigma_i^{n+1}(T)$ is the one labeled by~$j+1$ on the string link~$T$. The case~$i=j$ is trivial to check.
	
	Let us verify relations \eqref{compcofcod}. Let $T \in \mc{RSL}_{n}$. If $i=j$ and $i\neq 0$, we have
	\[\sigma_i^n\delta_i^{n+1}(T)\overset{(i)}{=}\sigma_i^n\left(
	\,
	\psfrag{b}[cc][cc][0.75]{$1$}
	\psfrag{i}[cc][cc][0.75]{\hspace{0.3cm}$i$}
	\psfrag{i+1}[cc][cc][0.75]{\hspace{0.3cm}$i+1$}
	\psfrag{e}[cc][cc][0.75]{\hspace{0.1cm}$n+1$}
	\psfrag{T}[cc][cc][0.85]{$T$}
	\rsdraw{0.55}{1}{SLdeltaiv2}\; \hspace{0.2cm}\right)\overset{(ii)}{=}\,
	\psfrag{b}[cc][cc][0.75]{$0$}
	\psfrag{i}[cc][cc][0.75]{$i-1$}
	\psfrag{i+1}[cc][cc][0.75]{\hspace{0.22cm}$i+1$}
	\psfrag{e}[cc][cc][0.75]{$n+1$}
	\psfrag{T}[cc][cc][0.85]{$T$}
	\rsdraw{0.55}{1}{sidiv2}\;\overset{(iii)}{=}T.\]
	Here~$(i)$ follows from the definition, $(iii)$ follows by the isotopy, and~$(ii)$ follows from the definition and since the~$(i+1)$-th component of the string link~$\delta_i^{n+1}(T)$ is the unlabeled component inserted between the components labeled by~$i$ and~$i+1$ on the string link $T$.
	The case~$i=j=0$ is trivial to check. Next, consider the case when $i<j$. If $i\neq 0$, we have
	\begin{align*}
	\sigma_j^n\delta_i^{n+1}(T)&\overset{(i)}{=}\sigma_j^n\left(\,
	\psfrag{b}[cc][cc][0.75]{$1$}
	\psfrag{i}[cc][cc][0.75]{\hspace{0.3cm}$i$}
	\psfrag{i+1}[cc][cc][0.75]{\hspace{0.22cm}$i+1$}
	\psfrag{e}[cc][cc][0.75]{\hspace{0.1cm}$n+1$}
	\psfrag{T}[cc][cc][0.85]{$T$}
	\rsdraw{0.55}{1}{SLdeltaiv3}\;\hspace{0.2cm}\right)\overset{(ii)}{=}
	\,
	\psfrag{b}[cc][cc][0.75]{$1$}
	\psfrag{i}[cc][cc][0.75]{\hspace{0.3cm}$i$}
	\psfrag{i+1}[cc][cc][0.75]{\hspace{0.3cm}$i+1$}
	\psfrag{j}[cc][cc][0.75]{$j$}
	\psfrag{j+1}[cc][cc][0.75]{\hspace{0.62cm}$j+1$}
	\psfrag{e}[cc][cc][0.75]{\hspace{0.05cm}$n+1$}
	\psfrag{T}[cc][cc][0.85]{$T$}
	\rsdraw{0.45}{1}{sjdiiltjv2}\;\\&\overset{(iii)}{=}\delta_i^   {n}\left(\,
	\psfrag{b}[cc][cc][0.75]{$1$}
	\psfrag{j}[cc][cc][0.75]{$j$}
	\psfrag{j+1}[cc][cc][0.75]{\hspace{0.4cm}$j+1$}
	\psfrag{e}[cc][cc][0.75]{\hspace{0.6cm}$n+1$}
	\psfrag{T}[cc][cc][0.85]{$T$}
	\rsdraw{0.55}{1}{SLsigmaj}\;
\hspace{0.2cm}
	\right)\overset{(iv)}{=}\delta_i^n\sigma_{j-1}^{n-1}(T).
	\end{align*}
	Here $(i)$, $(iii)$, $(iv)$ follow from the definition and $(ii)$ follows from the definition and the hypothesis that $i<j$.
	Indeed, since we count the unlabeled trivial component, which is inserted between the components labeled by $i$ and $i+1$, the $(j+1)$-th component of the string link $\delta_i^{n+1}(T)$ is the one labeled by $j$ on the string link $T$.
	We proceed in the same way if~$i=0$. The cases when $i>j+1$ or $i=j+1$ are proven analogously.

	Let us verify the relation~\eqref{compcoccof}. Assume that~$2\le i \le n-1$. For~$T\in \mc{RSL}_{n-1}$, we have
	\begingroup
	\allowdisplaybreaks
	\begin{align*}
	\tau_n\delta_i^n(T)&\overset{(i)}{=}\,
	\psfrag{b}[cc][cc][0.75]{$1$}
	\psfrag{nb}[cc][cc][0.75]{$2$}
	\psfrag{i-1}[cc][cc][0.75]{\hspace{-0.4cm}$i-1$}
	\psfrag{i}[cc][cc][0.75]{\hspace{-0.2cm}$i$}
	\psfrag{i+1}[cc][cc][0.75]{\hspace{-0.6cm}$i+1$}
	\psfrag{e}[cc][cc][0.75]{$n$}
	\psfrag{T}[cc][cc][0.85]{$T$}
	\rsdraw{0.47}{1}{tndi}\;\overset{(ii)}{=}\,
	\psfrag{b}[cc][cc][0.75]{$1$}
	\psfrag{nb}[cc][cc][0.75]{$2$}
	\psfrag{i-1}[cc][cc][0.75]{\hspace{-0.1cm}$i-1$}
	\psfrag{i}[cc][cc][0.75]{\hspace{-0.4cm}$i$}
	\psfrag{i+1}[cc][cc][0.75]{\hspace{-0.75cm}$i+1$}
	\psfrag{e}[cc][cc][0.75]{$n$}
	\psfrag{T}[cc][cc][0.85]{$T$}
	\rsdraw{0.47}{1}{tndi2}\;  \\&\\
	&\overset{(iii)}{=}\delta_{i-1}^n\left(\,
	\psfrag{b}[cc][cc][0.75]{$1$}
	\psfrag{nb}[cc][cc][0.75]{$2$}
	\psfrag{i-1}[cc][cc][0.75]{$i-1$}
	\psfrag{i}[cc][cc][0.75]{\hspace{-0.4cm}$i$}
	\psfrag{i+1}[cc][cc][0.75]{\hspace{-0.15cm}$i+1$}
	\psfrag{e}[cc][cc][0.75]{$n$}
	\psfrag{T}[cc][cc][0.85]{$T$}
	\rsdraw{0.47}{1}{tndi3}\;\right)\overset{(iv)}{=}\delta_{i-1}^n\tau_{n-1}(T).
	\end{align*}
	\endgroup
	Here $(i), (iii), (iv)$ follow from the definition and $(ii)$ follows by isotopy and \Reid{3} move. The remaining cases are shown in the same manner.
	
	Let us check the relation \eqref{compcoccod}.
	If $1\le i \le n$ and $T\in \mc {RSL}_{n+1}$, then
	\begin{align*}
	\tau_n\sigma_i^n(T)&\overset{(i)}{=}\,
	\psfrag{b}[cc][cc][0.75]{$0$}
	\psfrag{j}[cc][cc][0.75]{$i$}
	\psfrag{j+1}[cc][cc][0.75]{\hspace{-0.5cm}$i+1$}
	\psfrag{e}[cc][cc][0.75]{\hspace{0.6cm}$n+1$}
	\psfrag{T}[cc][cc][0.85]{$T$}
	\rsdraw{0.45}{1}{tnsj1}\;\hspace{1.3cm}\overset{(ii)}{=}\,
	\psfrag{b}[cc][cc][0.75]{$0$}
	\psfrag{j}[cc][cc][0.75]{$i$}
	\psfrag{j+1}[cc][cc][0.75]{\hspace{-0.5cm}$i+1$}
	\psfrag{e}[cc][cc][0.75]{\hspace{0.6cm}$n+1$}
	\psfrag{T}[cc][cc][0.85]{$T$}
	\rsdraw{0.45}{1}{tnsj2}\;\\&\\&\overset{(iii)}{=}\,
	\psfrag{b}[cc][cc][0.75]{$0$}
	\psfrag{j}[cc][cc][0.75]{$i$}
	\psfrag{j+1}[cc][cc][0.75]{\hspace{-0.5cm}$i+1$}
	\psfrag{e}[cc][cc][0.75]{\hspace{0.5cm}$n+1$}
	\psfrag{T}[cc][cc][0.85]{$T$}
	\rsdraw{0.45}{1}{tnsj3}\;\hspace{0.5cm}\overset{(iv)}{=}\sigma_{i-1}^n\left(\,
	\psfrag{b}[cc][cc][0.75]{$0$}
	\psfrag{j-1}[cc][cc][0.75]{\hspace{-0.65cm}$i-1$}
	\psfrag{j}[cc][cc][0.75]{\hspace{-0.7cm}$i$}
	\psfrag{e}[cc][cc][0.75]{\hspace{-0.1cm}$n+1$}
	\psfrag{T}[cc][cc][0.85]{$T$}
	\rsdraw{0.45}{1}{tnsj4}\;\hspace{0.2cm}\right)\\&\overset{(v)}{=}\sigma_{i-1}^n\tau_{n+1}(T).
	\end{align*}
	Here~$(i),(iv),(v)$ follow from the definition,~$(ii)$ follows by isotopy, and $(iii)$ follows by isotopy and~\Reid{3} move.
	
	According to \cite[Section 6.1.1]{loday98}, the relation~\eqref{tndelta0} is a consequence of relations~\eqref{cocyclicity} and~\eqref{compcoccof}.
	Similarly, the relation~\eqref{tnsigma0} is a consequence of relations~\eqref{cocyclicity} and~\eqref{compcoccod}.
	Hence, it suffices to show that the relation~\eqref{cocyclicity} holds.
	We show it in the case~$n=1$.
	The general case is treated similarly.
	If~$T \in \mc{RSL}_1$, then
	\[
	\tau_1^2(T)\overset{(i)}{=}
	\,
	\psfrag{b}[cc][cc][0.75]{\hspace{-0.1cm}$0$}
	\psfrag{e}[cc][cc][0.75]{$1$}
	\psfrag{T}[cc][cc][0.85]{$T$}
	\rsdraw{0.45}{1}{cycrel1} \; \overset{(ii)}{=}
	\,
	\psfrag{b}[cc][cc][0.75]{\hspace{-0.1cm}$0$}
	\psfrag{e}[cc][cc][0.75]{$1$}
	\psfrag{T}[cc][cc][0.85]{$T$}
	\rsdraw{0.45}{1}{cycrel2}\;\overset{(iii)}{=}\,
	\psfrag{b}[cc][cc][0.75]{\hspace{-0.1cm}$0$}
	\psfrag{e}[cc][cc][0.75]{$1$}
	\psfrag{T}[cc][cc][0.85]{$T$}
	\rsdraw{0.45}{1}{cycrel3}\;\overset{(iv)}{=}\,
	\psfrag{b}[cc][cc][0.75]{\hspace{-0.1cm}$0$}
	\psfrag{e}[cc][cc][0.75]{$1$}
	\psfrag{T}[cc][cc][0.85]{$T$}
	\rsdraw{0.45}{1}{cycrel4}\;\overset{(v)}{=}T.
	\]
	Here~$(i)$ follows from the definition,~$(ii)$ by adding one po\-si\-tive and one ne\-ga\-tive left hand twist on each component and by using the na\-tu\-ra\-li\-ty~of twists,~$(iii)$ by iso\-to\-py and~$\Reid{3}$~move,~$(iv)$ by isotopy,~$(v)$ by isotopy,~$\Reid{2}$ move, and~$\Reid{3}$ move. The general case is treated similarly.
	
	\section{Ribbon categories and graphical calculus}\label{catprelimini}
	
	In this section, we recall some algebraic preliminaries on ribbon categories and their graphical calculus used in the remaining sections of the paper. For more details, see \cite{moncatstft}.
	
	\subsection{Conventions} In what follows, we suppress in our formulas the associativity and unitality constraints of the monoidal category. We denote by $\tens$ and $\uu$ the monoidal product and unit object of a monoidal category. For any objects $X_1, \dots, X_n$ of a monoidal category with $n\ge 2$, we set $$X_1 \tens X_2 \tens \cdots \tens X_n=(\cdots ((X_1\tens X_2)\tens X_3)\tens \cdots \tens X_{n-1} )\tens X_n$$ and similarly for morphisms.

	\subsection{Braided categories} \label{braidedcats}
	A \textit{braiding} of a monoidal category $(\mc B, \tens, \uu)$ is a family~$\tau=\{\tau_{X,Y}\co X\tens Y \to Y\tens X\}_{X,Y\in \Ob{\mc B}}$ of natural isomorphisms such that
	\begin{align*}
	\tau_{X,Y\tens Z}&= (\id_Y \tens \tau_{X,Z})(\tau_{X,Y} \tens \id_Z) \text{ and } \\
	\tau_{X\tens Y, Z}&= (\tau_{X,Z} \tens \id_Y)(\id_X \tens \tau_{Y,Z})
	\end{align*}
	for all~$X,Y,Z \in \Ob{\mc B}$.
	A \textit{braided category} is a monoidal category endowed with a braiding.
	A braiding~$\tau$ of~$\mc B$ is \textit{symmetric} if for all $X,Y \in \Ob{\mc B}$, \[\tau_{Y,X}\tau_{X,Y}=\id_{X\tens Y}.\]
	A \textit{symmetric category} is a monoidal category endowed with a symmetric braiding.

	\subsection{Braided categories with a twist} \label{catswithtwist} A \textit{twist} for a braided monoidal category~$\mc B$ is a natural isomorphism $\theta=\{\theta_X \co X \to X\}_{X \in \Ob{\mc B}}$ such that for all $X,Y \in \Ob{\mc B}$,
	\begin{equation} \label{twistcondition}\theta_{X\tens Y}= \tau_{Y,X}\tau_{X,Y} (\theta_X \tens \theta_Y). \end{equation}
	Note that \eqref{twistcondition} implies $\theta_\uu= \id_\uu$.
	For example, the family $\id_{\mc B}=\{\id_X\co X \to X\}_{X\in \Ob{\mc B}}$ is a twist for $\mc B$ if and only if $\mc B$ is symmetric.
	Also, any ribbon category (see Section \ref{prelimiribbon}) has a canonical twist.
	By a \textit{braided category with a twist}, we mean a braided category endowed with a twist.

	\subsection{Graphical calculus} \label{cg}
	In this paper, we intensively use the \textit{Penrose graphical calculus}, which allows us to avoid lengthy algebraic computations by using topological arguments.
	The diagrams read from bottom to top.
	In a monoidal category~$\mc B$, the diagrams are made of arcs colored by objects of~$\mc B$ and of boxes, colored by morphisms of~$\mc B$.
	Arcs colored by~$\uu$ may be omitted in the pictures.
	The identity morphism of an object~$X$, a morphism~$f\co X\to Y$ in~$\mc B$, and its composition with a morphism~$g\co Y\to Z$ in~$\mc B$ are represented respectively as
	\[
	\,
	\psfrag{X}[cc][cc][0.75]{$X$}
	\rsdraw{0.45}{1}{idXunori}
	\;, \quad
	\,
	\psfrag{X}[cc][cc][0.75]{$X$}
	\psfrag{Y}[cc][cc][0.75]{$Y$}
	\psfrag{f}[cc][cc][0.85]{$f$}
	\rsdraw{0.45}{1}{fXYunori}
	\;,\quad \text{and} \quad
	\,
	\psfrag{X}[cc][cc][0.75]{$X$}
	\psfrag{Y}[cc][cc][0.75]{$Y$}
	\psfrag{Z}[cc][cc][0.75]{$Z$}
	\psfrag{f}[cc][cc][0.85]{$f$}
	\psfrag{g}[cc][cc][0.85]{$g$}
	\rsdraw{0.45}{1}{morcompounori}
	\;.\]
	The tensor product of two morphisms~$f\co X\to Y$ and~$g\co U \to V$ is represented by placing a picture of~$f$ to the left of the picture of~$g$:
	\[
	f\tens g
	=
	\,
	\psfrag{X}[cc][cc][0.75]{$X$}
	\psfrag{Y}[cc][cc][0.75]{$Y$}
	\psfrag{U}[cc][cc][0.75]{$U$}
	\psfrag{V}[cc][cc][0.75]{$V$}
	\psfrag{f}[cc][cc][0.85]{$f$}
	\psfrag{g}[cc][cc][0.85]{$g$}
	\rsdraw{0.45}{1}{tensfg1unori}
	\;.
	\]
	Any diagram represents a morphism. The latter depends only on the isotopy class of the diagram representing it. For example, the
	\emph{level-exchange property}
	\[
	\, \psfrag{X}[cc][cc][0.75]{$X$}
	\psfrag{Y}[cc][cc][0.75]{$Y$}
	\psfrag{U}[cc][cc][0.75]{$U$}
	\psfrag{V}[cc][cc][0.75]{$V$}
	\psfrag{f}[cc][cc][0.85]{$f$}
	\psfrag{g}[cc][cc][0.85]{$g$}
	\rsdraw{0.45}{1}{levelexchange1}\;= \, \psfrag{X}[cc][cc][0.75]{$X$}
	\psfrag{Y}[cc][cc][0.75]{$Y$}
	\psfrag{U}[cc][cc][0.75]{$U$}
	\psfrag{V}[cc][cc][0.75]{$V$}
	\psfrag{f}[cc][cc][0.85]{$f$}
	\psfrag{g}[cc][cc][0.85]{$g$}
	\rsdraw{0.45}{1}{levelexchange3}\;=\, \psfrag{X}[cc][cc][0.75]{$X$}
	\psfrag{Y}[cc][cc][0.75]{$Y$}
	\psfrag{U}[cc][cc][0.75]{$U$}
	\psfrag{V}[cc][cc][0.75]{$V$}
	\psfrag{f}[cc][cc][0.85]{$f$}
	\psfrag{g}[cc][cc][0.85]{$g$}
	\rsdraw{0.45}{1}{levelexchange2}\;. \]
	reflects the formula \[f\tens g =(f\tens \id_V)(\id_X \tens g)=(\id_Y \tens g)(f\tens \id_U).\]
	When $\mc B$ is braided with a braiding $\tau$, we depict
	\[
	\tau_{X,Y}=\,
	\psfrag{X}{$X$}
	\psfrag{Y}{$Y$}
	\rsdraw{0.45}{0.75}{brcgCMunori}
	\; \quad \text{and} \quad
	\tau_{X,Y}^{-1}=\,
	\psfrag{X}{$Y$}
	\psfrag{Y}{$X$}
	\rsdraw{0.45}{0.75}{invbrcgCMunori}
	\;.
	\]
	When~$\mc B$ is a braided category with a twist~$\theta=\{\theta_X\co X\to X\}_{X\in \Ob{\mc B}}$, we depict
	$$\theta_X = \, \psfrag{X}{$X$}
	\rsdraw{0.45}{0.75}{twistX}\; \quad \text{and} \quad \left(\theta_X\right)^{-1} = \, \psfrag{X}{$X$}
	\rsdraw{0.45}{0.75}{twistXinv}\;.$$
	The defining condition \eqref{twistcondition} for the twist says that for all $X,Y\in \Ob{\mc B}$,
	$$
	\, \psfrag{X}{$X\tens Y$}
	\rsdraw{0.45}{0.75}{twistcondcg0}\;\hspace{0.65cm}  = \hspace{0.2cm} \, \psfrag{X}{\hspace{-0.1cm}$X$}
	\psfrag{U}{$Y$}
	\rsdraw{0.45}{0.75}{twistcondcg}\;.
	$$
	We warn the reader that this notation should not be confused with notation of a left twist in a ribbon category (see Section \ref{prelimiribbon}). We choose this notation since any ribbon category is an important particular example of a braided category with a twist.

	\subsection{Pivotal categories} \label{pivotal}

A \textit{pivotal category} is a monoidal category ~$\mc C$ such that to any object~$X$ of~$\mc C$ is associated a dual object $X^*\in \Ob{\mc C}$ and four morphisms
\begin{align*}
  \evl_X \co X^* \tens X \to \uu, & \quad   \coevl_X \co \uu   \to X\tens X^*, \\
  \evr_X \co X \tens X^* \to \uu, & \quad \coevr_X \co \uu   \to X^* \tens X,
\end{align*}
satisfying several conditions and such that the so called left and right duality functors coincide as monoidal functors. The latter implies in particular that the dual morphism $f^* \co Y^* \to X^*$ of a morphism $f\co X\to Y$ in $\mc C$ is computed by
	\begin{align*}
	f^*&=(\id_{X^*} \tens \evr_Y )(\id_{X^*} \tens f \tens \id_{Y^*})(\coevr_X \tens \id_{Y^*})= \\
	& =(\evl_Y \tens \id_{{X^*}})(\id_{{Y^*}} \tens f \tens \id_{{X^*}})(\id_{{Y^*}} \tens \coevl_X).
	\end{align*}
The graphical calculus for monoidal categories (see Section~\ref{cg}) is extended to pivotal cate\-gories by orienting arcs. If an arc colored by~$X$ is oriented upwards, the represented object in source/target of corresponding morphism is~$X^*$.
	For example,~$\id_{X}, \id_{X^*}$, and a morphism~$f\co X\tens Y^* \tens Z \to U \tens V^*$ are depicted by
	\[
	\id_{X}=
	\,
	\psfrag{X}[cc][cc][0.75]{$X$}
	\rsdraw{0.45}{1}{idX}
	\;,\quad
	\id_{X^*}=
	\,
	\psfrag{X}[cc][cc][0.75]{$X$}
	\rsdraw{0.45}{1}{idXD}
	\;
	=
	\,
	\psfrag{X}[cc][cc][0.75]{$X^*$}
	\rsdraw{0.45}{1}{idX}
	\; \hspace{0.2cm},
	\quad \text{and} \quad
	f=
	\,
	\psfrag{X}[cc][cc][0.75]{$X$}
	\psfrag{Y}[cc][cc][0.75]{$Y$}
	\psfrag{Z}[cc][cc][0.75]{$Z$}
	\psfrag{U}[cc][cc][0.75]{$U$}
	\psfrag{V}[cc][cc][0.75]{$V$}
	\psfrag{f}[cc][cc][0.85]{\hspace{0.55cm}$f$}
	\rsdraw{0.45}{1}{exmorcg}
	\;. \]
	The morphisms $\evl_X, \evr_X, \coevl_X$, and $\coevr_X$ are respectively depicted by
	\[
	\,
	\psfrag{X}[cc][cc][0.75]{$X$}
	\rsdraw{0.45}{1}{evlcg}
	\;, \quad
	\,
	\psfrag{X}[cc][cc][0.75]{$X$}
	\rsdraw{0.45}{1}{evrcg}
	\;, \quad
	\,
	\psfrag{X}[cc][cc][0.75]{$X$}
	\rsdraw{0.45}{1}{coevlcg}
	\;,  \quad \text{and} \quad
	\,
	\psfrag{X}[cc][cc][0.75]{$X$}
	\rsdraw{0.45}{1}{coevrcg}
	\;.
	\]
For more details, see \cite[Chapter 1]{moncatstft}.
	\subsection{Ribbon categories} \label{prelimiribbon}
	Let~$\mc B$ be a braided pivotal category.
	The \emph{left twist} of an object~$X$ of $\mc B$ is defined by
	\[\theta_X^l=  \, \psfrag{X}{$X$}
	\rsdraw{0.45}{0.75}{ribbonlplus}\;= (\id_X\tens \evr_X)(\tau_{X,X}\tens \id_{X^*})(\id_X\tens \coevl_X)\co X\to X,\]
	while the \emph{right twist} of $X$ is defined by
	\[\theta_X^r=\, \psfrag{X}{$X$}
	\rsdraw{0.45}{0.75}{ribbonrplus}\;=(\evl_X\tens \id_{X})(\id_{X^*}\tens \tau_{X,X})(\coevr_X\tens \id_X)\co X\to X.\]
	The left and the right twist are natural isomorphisms with inverses
	$$
	(\theta_X^l)^{-1}=\, \psfrag{X}{$X$}
	\rsdraw{0.45}{0.75}{ribbonlminus}\; \quad \text{and} \quad
	(\theta_X^r)^{-1}=\, \psfrag{X}{$X$}
	\rsdraw{0.45}{0.75}{ribbonrminus}\;.
	$$
	A \textit{ribbon category} is a braided pivotal category $\mc B$ such that $\theta_X^l=\theta_X^r$  for all $X \in \Ob{\mc B}$.
	In this case, the family~$\theta=\{\theta_X=\theta_X^l=\theta_X^r \co X\to X\}_{X\in \Ob{\mc{B}}}$ is a twist in the sense of Section~\ref{catswithtwist} and is called
	the {\it twist} of $\mc B$.

	\section{Algebraic cyclic theories} \label{algebraiccyclic}
	In this section, $\mc B$ denotes a braided category with a twist $\theta$. We review some constructions of (co)cyclic sets from (co)algebras in $\mc B$.

	\subsection{A cocyclic set from coalgebras}
	\label{sec-coalg}
	A~\textit{coalgebra} in~$\mc B$ is a triple $(C, \Delta, \varepsilon)$, where~$C$ is an object of~$\mc B$,~$\Delta \co C\to C\tens C$ and~$\varepsilon\co C \to \uu$ are morphisms in~$\mc B$, called~\textit{comultiplication} and~\textit{counit} respectively, which satisfy
	\begin{equation*}
	(\Delta\tens \id_C)\Delta= (\id_C \tens \Delta)\Delta \quad \text{and} \quad  (\id_C \tens \varepsilon)\Delta=\id_C= (\varepsilon \tens \id_C)\Delta.
	\end{equation*}
	The comultiplication and the counit are depicted by
	$$\,
	\psfrag{C}[cc][cc][0.75]{$C$}
	\rsdraw{0.45}{1}{comultiplication}
	\;
	\quad \text{and} \quad \,
	\psfrag{C}[cc][cc][0.75]{$C$}
	\rsdraw{0.45}{1}{counit}
	\;.$$
	
	Any coalgebra~$C$ in~$\mc B$ gives rise to a cocyclic set $C^\bullet$ as follows. For any~$n \in \N$, define~$C^n=\Hom_\mc{B}(C^{\tens n+1}, \uu).$
	Next, define the cofaces $\{\delta_i^n \co C^{n-1} \to C^n\}_{n\in \N^*, 0\le i \le n}$, the codegeneracies~$\{\sigma_j^n \co C^{n+1} \to C^n\}_{n\in \N, 0\le j \le n}$, and the cocyclic operators~$\{\tau_n \co C^n \to C^n\}_{n\in \N}$ by set\-ting
	\begin{gather*}
	\delta_i^n (f) =
	\,
	\psfrag{b}[cc][cc][0.75]{\hspace{-0.2cm}$0$}
	\psfrag{i}[cc][cc][0.75]{$i$}
	\psfrag{e}[cc][cc][0.75]{$n$}
	\psfrag{T}[cc][cc][0.85]{\hspace{1cm}$f$}
	\rsdraw{0.55}{1}{Edeltai}\;, \quad \sigma_j^n (f) =
	\,
	\psfrag{b}[cc][cc][0.75]{\hspace{-0.2cm}$0$}
	\psfrag{j}[cc][cc][0.75]{$j$}
	\psfrag{e}[cc][cc][0.75]{$n$}
	\psfrag{T}[cc][cc][0.85]{\hspace{1.5cm}$f$}
	\rsdraw{0.55}{1}{HDsj}\;,\quad \tau_n(f)=\,
	\psfrag{b}[cc][cc][0.75]{\hspace{-0.1cm}$0$}
	\psfrag{bn}[cc][cc][0.75]{\hspace{-0.4cm}$1$}
	\psfrag{er}[cc][cc][0.75]{\hspace{0.4cm}$n-1$}
	\psfrag{e}[cc][cc][0.75]{$n$}
	\psfrag{T}[cc][cc][0.85]{\hspace{0.1cm}$f$}
	\rsdraw{0.55}{1}{HDtn}\;.
	\end{gather*}
	Here~$\tau_0=\id_{C^0}$. An integer~$k$ below an arc denotes the~$k$-th tensorand of a tensor power of~$C$.
	\begin{lem} \label{coalg}
	The family $C^\bullet=\{C^n\}_{n\in \N}$ endowed with the cofaces $\{\delta_i^n\}_{n\in \N^*, 0 \le i \le n}$, the codegeneracies $\{\sigma_j^n\}_{n\in \N, 0 \le j \le n}$, and the cocyclic operators $\{\tau_n\}_{n\in \N}$ is a cocyclic set.
	\end{lem}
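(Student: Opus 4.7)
To prove the lemma, the plan is to verify each of the defining relations \eqref{cofaces}--\eqref{cocyclicity} by a graphical computation in $\mc B$, working with the Penrose calculus reviewed in Section~\ref{cg}. Throughout, $f \co C^{\tens n+1} \to \uu$ will be represented as a box at the top of a diagram with $n+1$ input strands colored by $C$, and the operators $\delta_i^n$, $\sigma_j^n$, $\tau_n$ will be read off the pictures in the statement.

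First I would handle the purely simplicial relations. Relation~\eqref{cofaces} reduces to the level-exchange property, since two cofaces at positions $i<j$ simply insert the counit $\varepsilon$ on disjoint strands. Relation~\eqref{codegeneracies} is a graphical restatement of coassociativity $(\Delta \tens \id_C)\Delta = (\id_C \tens \Delta)\Delta$, again applied at disjoint positions after relabeling. Relation~\eqref{compcofcod} splits into cases: when the labels $i$ and $j$ do not collide, $\delta_i^{n+1}$ and $\sigma_j^n$ act on disjoint strands so level exchange finishes the job, while the middle case $i=j$ or $i=j+1$ reduces to the counit axiom $(\id_C \tens \varepsilon)\Delta = \id_C = (\varepsilon \tens \id_C)\Delta$ applied on a single strand.

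Next, for the compatibility relations \eqref{compcoccof} and \eqref{compcoccod} with the cocyclic operator, the main tool is naturality of the braiding $\tau$ (together with naturality of $\theta$): any braiding strand may be slid past a box representing $\Delta$ or $\varepsilon$ sitting on an inner strand, so the cyclic rotation commutes up to relabeling with the insertions of coface/codegeneracy boxes not involving the first tensorand. The boundary relations \eqref{tndelta0} and \eqref{tnsigma0} can be skipped, since by \cite[Section 6.1.1]{loday98} they are formal consequences of the cocyclicity \eqref{cocyclicity} together with \eqref{compcoccof} and \eqref{compcoccod} respectively.

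The principal obstacle is thus the cocyclicity $\tau_n^{n+1} = \id_{C^n}$, which is precisely where the twist axiom \eqref{twistcondition} enters. My plan is to iterate the diagram defining $\tau_n$ a total of $n+1$ times and track what accumulates: each of the $n+1$ input strands returns to its original position but picks up braiding crossings and a twist morphism. Using naturality of $\tau$ to group adjacent crossings together, and then applying \eqref{twistcondition} inductively in the form $\theta_{X_1 \tens \cdots \tens X_k} = \tau_{X_k \tens \cdots \tens X_2, X_1}\,\tau_{X_1, X_2 \tens \cdots \tens X_k}\,(\theta_{X_1}\tens \theta_{X_2 \tens \cdots \tens X_k})$, these crossings and individual twists combine into a single twist $\theta_{C^{\tens n+1}}$ acting just below the box $f$. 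Composing with $f \co C^{\tens n+1} \to \uu$ and using $\theta_\uu = \id_\uu$ together with naturality of $\theta$ (so that $f \circ \theta_{C^{\tens n+1}} = \theta_\uu \circ f = f$) collapses the total diagram to $f$, yielding $\tau_n^{n+1}(f)=f$. The case $n=1$ is a direct analogue of the argument used in the proof of Theorem~\ref{slmain} for ribbon string links, and I expect the general $n$ to follow the same scheme with more braiding crossings to reorganize; keeping track of these crossings is the step that will require the most care.
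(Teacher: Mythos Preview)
Your proposal is correct and follows essentially the same strategy as the paper. The paper's proof organizes the computation slightly differently: it writes $\delta_i^n(f)=f\,d_i^n$, $\sigma_j^n(f)=f\,s_j^n$, $\tau_n(f)=f\,t_n$ for explicit morphisms $d_i^n,s_j^n,t_n$ in $\mc B$, and then verifies the \emph{cyclic} relations \eqref{faces}--\eqref{s0tn} for these precomposing morphisms together with the intermediate ``twisted cyclicity'' $t_n^{n+1}=(\theta_{C^{\tens n+1}})^{-1}$; the cocyclicity of $\tau_n$ then follows exactly as you say, by naturality of $\theta$ and $\theta_\uu=\id_\uu$. The ingredients you invoke (level-exchange, coassociativity/counitality, naturality of the braiding, the twist axiom \eqref{twistcondition}, and Loday's reduction for \eqref{tndelta0} and \eqref{tnsigma0}) are precisely those used in the paper. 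One small correction: iterating $\tau_n$ produces the \emph{inverse} twist $(\theta_{C^{\tens n+1}})^{-1}$ rather than $\theta_{C^{\tens n+1}}$ (note $t_0=\theta_C^{-1}$), but this does not affect your argument since $\theta^{-1}$ is equally natural and satisfies $\theta_\uu^{-1}=\id_\uu$.
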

	Lemma~\ref{coalg} is proved in Section~\ref{Proofscoalg}.
	
	\subsection{A cyclic set from algebras}
	\label{sec-alg}
	An~\textit{algebra} in~$\mc B$ is a triple~$(A,m,u)$, where~$A$ is an object of~$\mc B$,~$m\co A\tens A \to A$ and~$u \co \uu \to A$ are morphisms in~$\mc B$, called~\textit{multiplication} and~\textit{unit} respectively, which satisfy
	\begin{equation*}
	m(m\tens \id_A)=m(\id_A \tens m) \quad \text{and} \quad m(u \tens \id_A)= \id_A= m(\id_A\tens u).
	\end{equation*}
	The multiplication and the unit are depicted by
	$$\,
	\psfrag{A}[cc][cc][0.75]{$A$}
	\rsdraw{0.45}{1}{multiplication}
	\;
	\quad \text{and} \quad \,
	\psfrag{A}[cc][cc][0.75]{$A$}
	\rsdraw{0.45}{1}{unit}
	\;.$$
	
	Any algebra~$A$ in~$\mc B$ gives rise to a cyclic set~$A_\bullet$ as follows. For any~$n \in \N$, define~$A_n=\Hom_\mc{B}(A^{\tens n+1}, \uu)$.
	Next, define the faces~$\{d_i^n\co A_n \to A_{n-1} \}_{n\in \N^*, 0\le i \le n}$, the dege\-neracies~$\{s^n_j\co A_n \to A_{n+1}\}_{n \in \N, 0\le j \le n}$, and cyclic operators~$\{t_n\co A_n \to A_n\}_{n\in \N}$ by setting
	\begin{gather*}
	d_0^n(f)=
	\, \psfrag{b}[cc][cc][0.75]{$0$}
	\psfrag{e}[cc][cc][0.75]{$n-1$}
	\psfrag{T}[cc][cc][0.85]{\hspace{-0.2cm}$f$}
	\rsdraw{0.55}{1}{Ed0} \;, \quad d_i^n(f)=\, \psfrag{b}[cc][cc][0.75]{$0$}
	\psfrag{i}[cc][cc][0.75]{$i-1$}
	\psfrag{k}[cc][cc][0.75]{\hspace{-0.2cm}$i$}
	\psfrag{e}[cc][cc][0.75]{$n-1$}
	\psfrag{T}[cc][cc][0.85]{\hspace{-0.08cm}$f$}
	\rsdraw{0.55}{1}{Edi} \;, \quad
	d_n^n(f)=\, \psfrag{b}[cc][cc][0.75]{$0$}
	\psfrag{e}[cc][cc][0.75]{$n-1$}
	\psfrag{T}[cc][cc][0.85]{\hspace{-0.2cm}$f$}
	\rsdraw{0.55}{1}{Edn} \;, \\
	s_j^n(f)=\,
	\psfrag{b}[cc][cc][0.75]{\hspace{-0.2cm}$0$}
	\psfrag{j}[cc][cc][0.75]{$j$}
	\psfrag{l}[cc][cc][0.75]{$j+1$}
	\psfrag{e}[cc][cc][0.75]{$n+1$}
	\psfrag{T}[cc][cc][0.85]{$f$}
	\rsdraw{0.55}{1}{Esj}
	\;, \quad t_n(f)=\, \psfrag{b}[cc][cc][0.75]{$0$}
	\psfrag{bn}[cc][cc][0.75]{\hspace{-0.4 cm}$1$}
	\psfrag{er}[cc][cc][0.75]{$1$}
	\psfrag{e}[cc][cc][0.75]{$n$}
	\psfrag{T}[cc][cc][0.85]{$f$}
	\rsdraw{0.55}{1}{Etn} \;.
	\end{gather*}
	Here $t_0=\id_{A_0}$.
	
	\begin{lem} \label{alg}
	The family $A_\bullet = \{A_n\}_{n\in \N}$ endowed with the faces $\{d_i^n\}_{n\in \N^*, 0 \le i \le n}$, the degene\-racies $\{s_j^n\}_{n\in \N, 0 \le j \le n}$, and the cyclic operators $\{t_n\}_{n\in \N}$ is a cyclic set.
	\end{lem}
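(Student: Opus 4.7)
The plan is to verify the relations \eqref{faces}-\eqref{cyclicity} directly using the graphical calculus, following the same strategy used in the proof of Theorem~\ref{slmain}(a). The argument is formally dual to that of Lemma~\ref{coalg}, obtained by reversing diagrams vertically and exchanging multiplication with comultiplication, so essentially the same sequences of moves will work in every case.

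The first block is the simplicial part \eqref{faces}, \eqref{degeneracies}, \eqref{compfacdeg}. Pictorially, each face $d_i^n$ is precomposition with a multiplication $m$ inserted at an appropriate position of the input tensor word $A^{\tens n+1}$, and each degeneracy $s_j^n$ is precomposition with the unit $u$ inserted at position $j+1$. Hence \eqref{faces} reduces to associativity of $m$ combined with the level-exchange property for multiplications acting on disjoint tensorands; \eqref{degeneracies} reduces to the evident disjointness of two insertions of $u$; and \eqref{compfacdeg} reduces to the unit axiom $m(u\tens \id_A)=\id_A=m(\id_A\tens u)$ together with level exchange.

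The second block consists of the compatibility relations \eqref{compcycfac}, \eqref{d0tn}, \eqref{compcycdeg}, \eqref{s0tn} between $t_n$ and the faces/degeneracies. These follow from isotopy in the graphical calculus together with the naturality of the braiding and of the twist, the rearrangements being formal translations of those used for the coalgebra in Lemma~\ref{coalg}. Moreover, as already exploited in the proof of Theorem~\ref{slmain}, \cite[Section 6.1.1]{loday98} ensures that \eqref{d0tn} and \eqref{s0tn} are automatic consequences of \eqref{cyclicity}, \eqref{compcycfac}, and \eqref{compcycdeg}, so only the latter two need to be checked directly.

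The main obstacle is the cyclicity condition \eqref{cyclicity}, $t_n^{n+1}=\id_{A_n}$, and it is the only place where the twist is genuinely used. By definition $t_n(f)$ is the precomposition of $f$ with a morphism of $A^{\tens n+1}$ that cyclically rotates the first strand to the last position, producing a braiding across the remaining $n$ strands. Iterating $n+1$ times yields the trivial permutation but leaves an accumulated braiding-and-twist endomorphism of $A^{\tens n+1}$. Using naturality of $\tau$ one groups the accumulated braidings into pairs $\tau_{A,A}\tau_{A,A}$ which, by repeated application of the twist axiom~\eqref{twistcondition}, combine with the twist contributions into $\theta_A\tens \theta_A$ factors; naturality of $\theta$ with respect to $m$, together with $\theta_\uu=\id_\uu$, then collapses the whole composite to $\id_{A^{\tens n+1}}$. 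I would first check the case $n=1$ explicitly, where a single invocation of \eqref{twistcondition} and one naturality move suffice, and then extend to general $n$ by iteration of the same three elementary moves: isotopy, naturality of the braiding, and the twist axiom.
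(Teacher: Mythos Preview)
Your overall plan—dualise the argument for Lemma~\ref{coalg}—is exactly what the paper does, but two concrete points in your write-up are wrong and would not survive a check against the definitions.

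First, you have the roles of multiplication and unit reversed. In Section~\ref{sec-alg} the face $d_i^n\colon A_n\to A_{n-1}$ is precomposition of $f\colon A^{\tens n+1}\to\uu$ with an \emph{insertion of the unit} at position $i$ (a morphism $A^{\tens n}\to A^{\tens n+1}$), while the degeneracy $s_j^n\colon A_n\to A_{n+1}$ is precomposition with the \emph{multiplication} of the tensorands $j,j{+}1$ (a morphism $A^{\tens n+2}\to A^{\tens n+1}$). This is forced by contravariance of $\Hom_{\mc B}(-,\uu)$: faces lower the simplicial index, hence must raise the tensor power of the source. With the correct assignment your checks of \eqref{faces}--\eqref{compfacdeg} still go through, but with the ingredients swapped: \eqref{faces} is pure level-exchange of two unit insertions, \eqref{degeneracies} uses associativity of $m$ in the case $i=j$, and \eqref{compfacdeg} is the unit axiom.

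Second, your treatment of the cyclicity condition \eqref{cyclicity} contains a genuine gap. The $(n{+}1)$-fold iterate of the underlying endomorphism of $A^{\tens n+1}$ is \emph{not} $\id_{A^{\tens n+1}}$: exactly as in Section~\ref{Proofscoalg}, the twist axiom \eqref{twistcondition} only collapses it to a global twist $\theta_{A^{\tens n+1}}^{\pm 1}$ (the ``twisted cyclicity'' \eqref{twistedcyc}). The identity $t_n^{n+1}(f)=f$ then follows from naturality of $\theta$ applied to the morphism $f\colon A^{\tens n+1}\to\uu$ itself, together with $\theta_\uu=\id_\uu$. Your appeal to ``naturality of $\theta$ with respect to $m$'' is misplaced—$m$ does not occur in $t_n$—and without passing through the codomain $\uu$ the twist does not vanish.
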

	The proof of Lemma \ref{alg} is similar to the proof of Lemma \ref{coalg}.
	\subsection{Cyclic duals} \label{passing}
	The cyclic duality $L\co \Delta C^\opp \to \Delta C$ from Section \ref{connes loday dual} transforms the cocyclic set $C^\bullet$ from Lemma \ref{coalg} into the cyclic set $C^\bullet\circ L$. For any $n\in \N$, $C^\bullet\circ L(n)=C^n=\Hom_{\mc B}(C^{\tens n+1}, \uu)$.
The faces $\{\tilde{d}_i^n \co \Hom_{\mc B}(C^{\tens n+1}, \uu) \to \Hom_{\mc B}(C^{\tens n}, \uu)\}_{n\in \N^*, 0\le i \le n}$, the degeneracies $\{\tilde{s}_j^n \co \Hom_{\mc B}(C^{\tens n+1}, \uu)\to \Hom_{\mc B}(C^{\tens n+2}, \uu)\}_{n \in \N, 0\le j \le n}$, and the cyclic operators $\{\tilde{t}_n \co \Hom_{\mc B}(C^{\tens n+1}, \uu) \to\Hom_{\mc B}(C^{\tens n+1}, \uu)\}_{n \in \N}$ are computed by setting
	\begin{gather*}
	\tilde{d}_i^n (f) =
	\,
	\psfrag{b}[cc][cc][0.75]{\hspace{-0.1cm}$1$}
	\psfrag{j}[cc][cc][0.75]{$i+1$}
	\psfrag{e}[cc][cc][0.75]{$n$}
	\psfrag{T}[cc][cc][0.85]{\hspace{1.3cm}$f$}
	\rsdraw{0.55}{1}{HDsj}\;, \quad
	\tilde{d}_n^n(f)=\,
	\psfrag{b}[cc][cc][0.75]{\hspace{-0.1cm}$1$}
	\psfrag{bn}[cc][cc][0.75]{$2$}
	\psfrag{e}[cc][cc][0.75]{\hspace{0.1cm}$n$}
	\psfrag{T}[cc][cc][0.85]{\hspace{0.3cm}$f$}
	\rsdraw{0.55}{1}{EdeltanFASO}\;, \\
	\tilde{s}_j^n(f)=
	\psfrag{b}[cc][cc][0.75]{\hspace{-0.1cm}$0$}
	\psfrag{i}[cc][cc][0.75]{$j+1$}
	\psfrag{e}[cc][cc][0.75]{$n+1$}
	\psfrag{T}[cc][cc][0.85]{\hspace{0.9cm}$f$}
	\rsdraw{0.55}{1}{Edeltai}\;, \quad
	\tilde{t}_n(f)=\, \psfrag{b}[cc][cc][0.75]{$0$}
	\psfrag{bn}[cc][cc][0.75]{\hspace{-0.3cm}$1$}
	\psfrag{er}[cc][cc][0.75]{$1$}
	\psfrag{e}[cc][cc][0.75]{$n$}
	\psfrag{T}[cc][cc][0.85]{$f$}
	\rsdraw{0.55}{1}{Etn} \;.
	\end{gather*}
	
	Similarly as above, the functor~$L^{\opp}\co \Delta C\to \Delta C^{\opp}$ transforms the cyclic set~$A_\bullet$ from Lemma~\ref{alg} into the cocyclic set~$A_\bullet \circ L^{\opp}$. By definitions,~$A_\bullet \circ L^\opp(n)=A_n=\Hom_{\mc B}(A^{\tens n+1}, \uu)$ for all~$n\in \N$.
The cofaces~$\{\tilde{\delta}_i^n\co\Hom_{\mc B}(A^{\tens n}, \uu) \to \Hom_{\mc B}(A^{\tens n+1}, \uu)\}_{n\in \N^*, 0\le i \le n}$, the codegeneracies~$\{\tilde{\sigma}_j^n \co \Hom_{\mc B}(A^{\tens n+2}, \uu) \to\Hom_{\mc B}(A^{\tens n+1}, \uu)\}_{n\in \N, 0\le j \le n}$, and the cocyclic operators~$\{\tilde{\tau}_n \co  \Hom_{\mc B}(A^{\tens n+1}, \uu) \to \Hom_{\mc B}(A^{\tens n+1}, \uu)\}_{n\in \N}$ are computed by
	\begin{gather*}
	\tilde{\delta}_i^n(f) = \, \psfrag{b}[cc][cc][0.75]{\hspace{-0.1cm}$0$}
	\psfrag{j}[cc][cc][0.75]{$i$}
	\psfrag{l}[cc][cc][0.75]{$i+1$}
	\psfrag{e}[cc][cc][0.75]{$n$}
	\psfrag{T}[cc][cc][0.85]{$f$}
	\rsdraw{0.55}{1}{Esj} \;, \quad
	\tilde{\delta}_n^n(f) =\, \psfrag{b}[cc][cc][0.75]{\hspace{-0.1cm}$0$}
	\psfrag{bn}[cc][cc][0.75]{\hspace{-0.1cm}$1$}
	\psfrag{be}[cc][cc][0.75]{$n-1$}
	\psfrag{e}[cc][cc][0.75]{$n$}
	\psfrag{T}[cc][cc][0.85]{\hspace{1.25cm}$f$}
	\rsdraw{0.55}{1}{EdeltanAKMA} \;, \quad \\ \tilde{\sigma}_j^n(f)=\,
	\psfrag{b}[cc][cc][0.75]{$0$}
	\psfrag{i}[cc][cc][0.75]{\hspace{-0.1 cm}$j$}
	\psfrag{k}[cc][cc][0.75]{\hspace{-0.4 cm}$j+1$}
	\psfrag{e}[cc][cc][0.75]{$n$}
	\psfrag{T}[cc][cc][0.85]{$f$}
	\rsdraw{0.55}{1}{Edi}\;, \quad
	\tilde{\tau}_n(f)=\, \psfrag{b}[cc][cc][0.75]{\hspace{-0.1cm}$0$}
	\psfrag{bn}[cc][cc][0.75]{\hspace{-0.4 cm}$1$}
	\psfrag{er}[cc][cc][0.75]{\hspace{0.2 cm}$n-1$}
	\psfrag{e}[cc][cc][0.75]{$n$}
	\psfrag{T}[cc][cc][0.85]{$f$}
	\rsdraw{0.55}{1}{HDtn} \;.
	\end{gather*}
	Note that the construction~$A_\bullet \circ L^\opp$ is a particular case of the work of Akrami and Majid~\cite{cycliccocycles} (since any algebra in a braided category with a twist is a ribbon algebra in the sense of~\cite{cycliccocycles}).
	
	\subsection{Proof of Lemma \ref{coalg}}\label{Proofscoalg}
	The cofaces~$\{\delta_{i}^n\}_{n\in \N^*, 0\le i \le n}$, the codegeneracies $\{\sigma_j^n\}_{n\in\N, 0\le j \le n}$, and the cocyclic operators~$\{\tau_n\}_{n\in \N}$ of~$C^\bullet$ are given by formulas~$\delta_i^n(f)=fd_i^n$,~$\sigma_j^n(f)=fs_j^n$, and~$\tau_n(f)=ft_n$, where
	\begin{gather*}
	d_i^n=\psfrag{b}{$0$}
	\psfrag{i}{\hspace{-0.05cm}$i$}
	\psfrag{e}{$n$}
	\rsdraw{0.55}{0.75}{faceAC} \;, \quad s_j^n=\,
	\psfrag{b}{$0$}
	\psfrag{j}{$j$}
	\psfrag{e}{$n$}
	\rsdraw{0.55}{0.75}{degenAC}\;,  \quad t_0= \theta_C^{-1}, \quad t_n = \, \psfrag{b}{$0$}
	\psfrag{bn}{\hspace{0.1cm}$1$}
	\psfrag{pe}[l]{\hspace{-0.2cm}$n-1$}
	\psfrag{e}{\hspace{-0.1cm}$n$}
	\rsdraw{0.55}{0.75}{cycopnegAC}  \;.
	\end{gather*}
	We claim that morphisms~$\{d_i^n\}_{n\in \N^*, 0\le i \le n}$,~$\{s_j^n\}_{n\in \N, 0\le j \le n}$, and~$\{t_n\}_{n\in \N}$ satisfy \eqref{faces}-\eqref{s0tn} and for all~$n \in \N$, the following ``twisted cyclicity condition'':
	\begin{equation} \label{twistedcyc}
	t_n^{n+1}=(\theta_{C^{\tens n+1}})^{-1}.
	\end{equation}
	This indeed implies that~$C^\bullet$ is a cocyclic set. Here, the cocyclicity condition~\eqref{cocyclicity} for~$\tau_n$ follows by the condition~\eqref{twistedcyc}, naturality of the twist of~$\mc B$, and the fact that~$\theta_{\uu}=\id_\uu.$ Indeed, for any~$n \in \N$ and any~$f \in \Hom_{\mc B}(C^{\tens n+1}, \uu)$,
	$$\tau_n^{n+1}(f)= ft_n^{n+1}=f(\theta_{C^{\tens n+1}})^{-1}=(\theta_{\uu})^{-1}f=f.$$
	
	Let us now check the fact that morphisms $\{d_i^n\}_{n\in \N^*, 0\le i \le n}$, $\{s_j^n\}_{n\in \N, 0\le j \le n}$, and $\{t_n\}_{n\in \N}$ satisfy~\eqref{faces}-\eqref{s0tn} and \eqref{twistedcyc}. Let~$n\ge 1$ and~$0\le i < j \le n+1$. By the level-exchange property, we have
	\begin{equation*}
	d_i^nd_j^{n+1}=
	\,
	\psfrag{b}{$0$}
	\psfrag{i}{$i$}
	\psfrag{j}{$j$}
	\psfrag{e}{\hspace{-0.37cm}$n+1$}
	\rsdraw{0.55}{0.75}{didjAC}
	\; \quad = \quad
	\,
	\psfrag{b}{$0$}
	\psfrag{i}{$i$}
	\psfrag{j}{$j$}
	\psfrag{e}{\hspace{-0.37cm}$n+1$}
	\rsdraw{0.55}{0.75}{dj-1diAC}\;
	=d_{j-1}^nd_i^{n+1},
	\end{equation*}
	whence the relation \eqref{faces}.
	
	We now check the relation \eqref{degeneracies}.
	Let $n\ge 0$ and $i\le j$. If $i<j$, we have by the level-exchange property that	
	\begin{equation*}s_i^{n+1}s_j^n=\,
	\psfrag{b}{$0$}
	\psfrag{i}{$i$}
	\psfrag{j}{$j$}
	\psfrag{e}{$n$}
	\rsdraw{0.55}{0.75}{sisjAC}\;\quad = \quad
	\,
	\psfrag{b}{$0$}
	\psfrag{i}{$i$}
	\psfrag{j}{$j$}
	\psfrag{e}{$n$}
	\rsdraw{0.55}{0.75}{sj+1siAC}\; =s_{j+1}^{n+1}s_i^n.
	\end{equation*} In the case when $i=j$, we have by the coassociativity of $C$ that
	\begin{equation*}s_i^{n+1}s_i^n=\,
	\psfrag{b}{$0$}
	\psfrag{i}{$i$}
	\psfrag{j}{$j$}
	\psfrag{e}{$n$}
	\rsdraw{0.55}{0.75}{sisiAC}\;\quad = \quad
	\,
	\psfrag{b}{$0$}
	\psfrag{i}{\hspace{-0.1cm}$i$}
	\psfrag{j}{$j$}
	\psfrag{e}{$n$}
	\rsdraw{0.55}{0.75}{si+1siAC}\; =s_{i+1}^{n+1}s_i^n.
	\end{equation*}
	
	Let us now show that relations \eqref{compfacdeg} hold. If $i<j$ or $i>j+1$, this follows by the level-exchange property.
	Further, if $i=j$, we have by the counitality of $C$ that
	\[
	d_i^{n+1}s_i^n=
	\,
	\psfrag{b}{$0$}
	\psfrag{i}{$i$}
	\psfrag{e}{$n$}
	\rsdraw{0.55}{0.75}{disiAC}=\id_{C^{\tens n+1}}.
	\;
	\]
	The case when $i=j+1$ also follows by the counitality of $C$.
	
	Let us now check the relation \eqref{compcycfac}.
	Let $n\ge 1$ and $1\le i \le n$.
	By the naturality of the braiding in $\mc B$, we have
	\[
	d_i^nt_n=
	\,
	\psfrag{b}{$0$}
	\psfrag{bn}{$1$}
	\psfrag{i}{\hspace{-0.1cm}$i-1$}
	\psfrag{pe}{$n-1$}
	\psfrag{e}{$n$}
	\rsdraw{0.55}{0.75}{ditnAC1}
	\;\quad= \quad  \,
	\psfrag{b}{$0$}
	\psfrag{bn}{$1$}
	\psfrag{i}{$i-1$}
	\psfrag{pe}{$n-1$}
	\psfrag{e}{$n$}
	\rsdraw{0.55}{0.75}{ditnAC2}
	\; =t_{n-1}d_{i-1}^n,
	\]
	whence the relation \eqref{compcycfac}.
	The relation \eqref{d0tn} follows by the naturality of the braiding, naturality of the twist morphism, and the fact that $\theta_\uu=\id_\uu$.
	The relation \eqref{compcycdeg} follows by the naturality of the braiding and the level-exchange property.
	
	By the naturality of the braiding and the equation \eqref{twistcondition}, we have
	\begin{align*}
	s_0^nt_n&=\,
	\psfrag{b}{$0$}
	\psfrag{bn}{\hspace{0.1cm}$1$}
	\psfrag{pe}{$n-1$}
	\psfrag{e}{$n$}
	\rsdraw{0.55}{0.75}{s0tnAC}\;\quad=\quad
	\,
	\psfrag{b}{$0$}
	\psfrag{bn}{\hspace{0.1cm}$1$}
	\psfrag{pe}{$n-1$}
	\psfrag{e}{$n$}
	\rsdraw{0.55}{0.75}{s0tnAC2}\;\\& \\
	&=\,
	\psfrag{b}{$0$}
	\psfrag{bn}{$1$}
	\psfrag{pe}{$n-1$}
	\psfrag{e}{$n$}
	\rsdraw{0.55}{0.75}{s0tnAC3}\;\quad=\quad
	\,
	\psfrag{b}{$0$}
	\psfrag{bn}{$1$}
	\psfrag{pe}{$n-1$}
	\psfrag{e}{$n$}
	\rsdraw{0.55}{0.75}{s0tnAC4}\;\quad=\quad t_{n+1}^2s_n^n,
	\end{align*}
	whence the relation~\eqref{s0tn} in the case when~$n\ge 1$. For~$n=0$, this follows by the equation~\eqref{twistcondition}.
	
	Finally, let us verify the ``twisted cyclicity condition'' \eqref{twistedcyc}.
	In the case~$n=1$ (the general case is treated similarly), this follows by naturality of the braiding and by equation~\eqref{twistcondition}:
	\[t_1^2=\quad\,
	\psfrag{b}{$0$}
	\psfrag{bn}{$0$}
	\psfrag{e}{$1$}
	\rsdraw{0.45}{0.75}{tn_n+1}\;\quad=\quad
	\,
	\psfrag{b}{$0$}
	\psfrag{bn}{$0$}
	\psfrag{e}{$1$}
	\rsdraw{0.45}{0.75}{tn_n+1AC2}\;=\left(\theta_{C^{\tens 2}}\right)^{-1}.\]
	\qed

	\section{Relation with quantum invariants} \label{relquantum}
	
	In this section we relate the (co)cyclic sets constructed via ribbon string links (in Section~\ref{slrelatedobjects}) to (co)cyclic sets (as in Section~\ref{algebraiccyclic}) associated to the coend of a ribbon category (which is a Hopf algebra object). The relationship is given by the quantum invariants à la Reshetikhin-Turaev.
	
	\subsection{Ribbon handles} \label{ribbon handles}
	Recall the notion of a ribbon from Section \ref{sldef}. Let~$n$ be a non-negative integer.
	A \textit{ribbon}~$n$-\emph{handle} is an oriented surface~$H$ embedded in the strip~$\R^2 \times \left[0,1\right]$ and decomposed into a dis\-joint u\-ni\-on of~$n$ ribbons such that~$H\cap \R^2\times \{1\} = \emptyset$ and so that~$H$ meets~$\R^2\times \{0\}$ orthogonally as follows. For all~$1\le k \le n$, the bottom base of the~$k$-th ribbon of~$H$ is the segment~$\left[2k-1-\frac{1}{4},2k-1+\frac{1}{4}\right]\times \{0\} \times \{0\}$, the top base of the~$k$-th ribbon of~$H$ is the segment~$\left[2k-\frac{1}{4},2k+\frac{1}{4}\right]\times \{0\} \times \{0\}$, and in the points of these segments, the orientation of~$T$ is determined by the vector~$(1,0,0)$ tangent to~$H$.
By an \textit{isotopy of ribbon handles}, we mean isotopy in $\R^2 \times [0,1]$ constant on the boundary and preserving splitting into ribbons as well as the orientation of the surface. As in Section \ref{sldef}, we present a ribbon~$n$-handle by a planar diagram with a blackboard framing convention:
	$$
	\,
	\psfrag{b}[cc][cc][0.75]{$1$}
	\psfrag{e}[cc][cc][0.75]{\hspace{0.1cm}$n$}
	\psfrag{T}[cc][cc][0.85]{$H$}
	\rsdraw{0.55}{1}{RHplanar}\;.$$
	As shown, we number its ribbons from left to the right.
	
	The category~$\textbf{RH}$ of ribbon handles has as objects non-negative integers.
	For two non-negative integers $m$ and $n$, the set of morphisms from $m$ to $n$ is defined by
	\[\Hom_{\textbf{RH}}(m,n)= \begin{cases}
	\text{isotopy classes of ribbon $n$-handles} \quad &\text{if } m=n, \\
	\emptyset \quad& \text{if } m\neq n.
	\end{cases}\]
	The composition~$H\circ H'$ of two ribbon~$n$-handles $H$ and~$H'$ and the identity for this composition are defined by
	$$
	H
	\circ
	H'=
	\,\psfrag{b}[cc][cc][0.75]{$1$}
	\psfrag{e}[cc][cc][0.75]{\hspace{0.1cm}$n$}
	\psfrag{T}[cc][cc][0.85]{$H$}
	\psfrag{H}[cc][cc][0.85]{$H'$}
	\rsdraw{0.55}{1}{RHcompo}\; \quad \text{and} \quad \id_n
	= \,\psfrag{b}[cc][cc][0.75]{$1$}
	\psfrag{e}[cc][cc][0.75]{$n$}
	\rsdraw{0.55}{1}{RHid}\;.
	$$
	
	Let us recall the construction of the mutually inverse functors~$\mathfrak{F}\co \textbf{RSL} \to \textbf{RH}$ and $\mathfrak{G}\co \textbf{RH} \to \textbf{RSL}$ from \cite{hopfdiag}.
	%\cite[Section 2.3., pp. 1686--1687]{hopfdiag}.
	For any non-negative integer~$n$, set~$\mathfrak{F}(n)=n$ and~$\mathfrak{G}(n)=n$.
	For an isotopy class of a ribbon~$n$-string link~$T$ and an isotopy class of a ribbon~$n$-handle~$H$, set
	\[\mathfrak{F}(T)=
	\,
	\psfrag{b}[cc][cc][0.75]{$1$}
	\psfrag{n}[cc][cc][0.75]{$2$}
	\psfrag{e}[cc][cc][0.75]{$n$}
	\psfrag{T}[cc][cc][0.85]{$T$}
	\rsdraw{0.55}{1}{SLtoRH}\; \quad \text{and} \quad
	\mathfrak{G}(H)=\,
	\psfrag{b}[cc][cc][0.75]{\hspace{0.35cm}$1$}
	\psfrag{n}[cc][cc][0.75]{$2$}
	\psfrag{e}[cc][cc][0.75]{\hspace{-0.1cm}$n$}
	\psfrag{T}[cc][cc][0.85]{$H$}
	\rsdraw{0.55}{1}{RHtoSL}\;.\]
	
	\subsection{Con\-vo\-lu\-tion ca\-te\-go\-ry} \label{conv}
	Let $A=(A,\mu,\eta)$ be an algebra and $C=(C,\Delta,\varepsilon)$ a coalgebra in a braided category~$\mc B$ (see Sections \ref{sec-coalg} and \ref{sec-alg}). The \textit{convolution category}~$\text{Conv}_{\mc B}(C,A)$ is defined as fol\-lows. Its ob\-jects are the non-ne\-ga\-tive in\-te\-gers.
	For two non-negative integers~$m$ and~$n$, the set of morphism from~$m$ to~$n$ is defined by
	\[
	\Hom_{\text{Conv}_{\mc B}(C,A)}(m,n)=
	\begin{cases}
	\Hom_{\mc B}(C^{\tens n}, A) \quad &\text{if } m=n, \\
	\emptyset \quad& \text{if } m\neq n.
	\end{cases}
	\]
	The composition of morphisms is given by the convolution product~$*$, which is defined as follows. For two morphisms~$f,g \in \Hom_{\mc B}(C^{\tens n}, A)$, we set
	$$f*g=\mu(f\tens g)\Delta_{C^{\tens n}},$$
	where~$\Delta_{C^{\tens n}}$ denotes the coproduct on~$C^{\tens n}$ (see \cite[Exercise 6.1.7]{moncatstft}).
	The identity of an object~$n \in\N$ is given by~$\id_n=\eta\varepsilon^{\tens n}$.
	
	\subsection{Coend of a category} \label{coendprelimini} Let~$\mc B$ be a pivotal category. Let~$F_{\mc B} \co \mc B^{\opp} \times \mc B \to \mc B$ be the functor defined by~$F_{\mc B}(X,Y)=X^*\tens Y$.
	A \textit{dinatural transformation} from~$F_{\mc B}$ to an object~$D$ of~$\mc B$ is a function~$d$ that assigns to any object~$X$ of~$\mc B$ a morphism~$d_X\co X^*\tens X \to D$ such that for all morphisms~$f\co X \to Y$ in $\mc B$,
	$$d_X(f^* \tens \id_{X})=d_Y(\id_{Y^*}\tens f).$$
	\textit{The coend of} $\mc B$, if it exists, is a pair~$(\coend,i)$ where~$\coend$ is an object of~$\mc B$ and~$i$ is a dinatural transformation from~$F_{\mc B}$ to~$\coend$, which is universal among all dinatural transformations.
	More precisely, for any dinatural transformation~$d$ from~$F_{\mc B}$ to~$D$, there exists a unique morphism~$\varphi\co \coend \to D$ in~$\mc B$ such that~$d_X=\varphi i_X$ for all~$X \in \Ob{\mc B}$.
	A coend~$(\coend,i)$ of a category~$\mc B$, if it exists, is unique up to a unique isomorphism commuting with the dinatural transformation.
	
	We depict the dinatural transformation $i=\{i_X\co X^*\tens X \to \coend\}_{X\in \Ob{\mc B}}$ as
	\[
	i_X=\,
	\psfrag{X}[cc][cc][0.75]{$X$}
	\psfrag{G}[cc][cc][0.75]{$X$}
	\psfrag{C}[cc][cc][0.75]{\hspace{0.3cm}$\coend$}
	\rsdraw{0.45}{1}{dincoend}
	\;.
	\]
	The coend of~$\mc B$, if it exists, is a coalgebra in~$\mc B$ with comultiplication~$\Delta \co \coend \to  \coend \tens \coend$ and counit~$\varepsilon \co \coend \to \uu$, which are unique morphisms such that, for all~$X \in \Ob{\mc B}$,
	$$
	\,
		\psfrag{G}[cc][cc][0.75]{$X$}
	\psfrag{X}[cc][cc][0.75]{$X$}
	\psfrag{Y}[cc][cc][0.75]{$Y$}
	\psfrag{C}[cc][cc][0.75]{$\coend$}
	\psfrag{f}[cc][cc][0.75]{$\Delta$}
	\rsdraw{0.45}{1}{dindelta1}
	\;=~~
	\,
	\psfrag{X}[cc][cc][0.75]{$X$}
	\psfrag{Y}[cc][cc][0.75]{$Y$}
	\psfrag{C}[cc][cc][0.75]{$\coend$}
	\rsdraw{0.45}{1}{dindelta2}
	\; \quad \text{and} \quad
	\,
	\psfrag{X}[cc][cc][0.75]{$X$}
	\psfrag{C}[cc][cc][0.75]{$\coend$}
	\psfrag{g}[cc][cc][0.75]{$\varepsilon$}
	\rsdraw{0.45}{1}{dineps1}
	\;
	=~~
	\,
	\psfrag{X}[cc][cc][0.75]{$X$}
	\rsdraw{0.45}{1}{dineps3}
	\;.
	$$
	
	An important factorization property is given in the following lemma.
	\begin{lem}[Fubini theorem for coends, \cite{catswork}] \label{CoendFubini}  Let~$(\coend,i)$ be a coend of a braided pivotal category~$\mc B$.
		If~$d=\{d_{X_1,\dots, X_n}\co X_1^*\tens X_1\tens \cdots \tens X_n^* \tens X_n \to D\}_{X_1,\dots, X_n \in \Ob{\mc B}}$ is a family
		of morphisms in~$\mc B$, which is dinatural in each~$X_k$ for~$1\leq k \leq n$, then there exists a unique morphism~$\varphi\co \coend^{\tens n} \to D$ in~$\mc B$ such that \[d_{X_1,\dots,X_n}=\varphi(i_{X_1}\tens \cdots \tens i_{X_n})\]
		for all~$X_1,\dots, X_n \in \Ob{\mc B}$.
	\end{lem}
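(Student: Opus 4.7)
The plan is to prove the lemma by induction on $n$; the base case $n = 1$ is exactly the defining universal property of the coend $(\coend, i)$.

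The essential ingredient for the inductive step is the observation that since $\mc B$ is pivotal, for any object $Z$ of $\mc B$ the functor $Z \tens (-) \co \mc B \to \mc B$ admits $Z^* \tens (-)$ as a right adjoint, as does $(-) \tens Z$. Both therefore preserve all colimits, in particular coends. Consequently, for any $Z_l, Z_r \in \Ob{\mc B}$, the object $Z_l \tens \coend \tens Z_r$ is a coend of the functor $(X, X') \mapsto Z_l \tens X^* \tens X' \tens Z_r$, with dinatural transformation $\{\id_{Z_l} \tens i_X \tens \id_{Z_r}\}_X$. This mild enrichment of the universal property is what makes the induction work.

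Using this, I would first fix $X_2, \ldots, X_n$, set $Z_r = X_2^* \tens X_2 \tens \cdots \tens X_n^* \tens X_n$, and view $\{d_{X_1, X_2, \ldots, X_n}\}_{X_1}$ as a dinatural transformation in $X_1$ into $D$. The enriched universal property (with $Z_l = \uu$) yields a unique morphism $\psi_{X_2, \ldots, X_n} \co \coend \tens X_2^* \tens X_2 \tens \cdots \tens X_n^* \tens X_n \to D$ with $d_{X_1, \ldots, X_n} = \psi_{X_2, \ldots, X_n}(i_{X_1} \tens \id_{Z_r})$. A standard ``factor by uniqueness'' argument --- comparing, for $f \co X_k \to Y_k$, the two morphisms obtained by pushing $f$ across $\psi$ on either side and invoking dinaturality of $d$ in $X_k$ --- shows that $\{\psi_{X_2, \ldots, X_n}\}$ is dinatural in each of $X_2, \ldots, X_n$.

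To complete the induction, I would apply the inductive hypothesis to $\{\psi_{X_2, \ldots, X_n}\}$. Since this family carries an auxiliary $\coend$-factor prepended to its source, the cleanest route is to prove by induction not the lemma as stated but the strengthened version: for any $Z \in \Ob{\mc B}$ and any family dinatural in each $X_k$ of morphisms $Z \tens X_1^* \tens X_1 \tens \cdots \tens X_n^* \tens X_n \to D$, there exists a unique $Z \tens \coend^{\tens n} \to D$ factoring through $\id_Z \tens i_{X_1} \tens \cdots \tens i_{X_n}$. The same inductive argument proves this strengthened version, which specializes to the lemma at $Z = \uu$. The main conceptual hurdle is precisely this threading of the inductive hypothesis through the prepended $Z$-factor, resolved by the strengthening; once in place, the rest is a dinaturality-plus-uniqueness diagram chase that does not even use the braiding, only the pivotal structure.
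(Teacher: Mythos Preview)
The paper does not supply its own proof of this lemma; it simply cites Mac Lane's \emph{Categories for the Working Mathematician} for the general Fubini theorem for coends, of which this statement is the specialization to the coend $\coend=\int^{X}X^{*}\tens X$. There is therefore nothing in the paper to compare your argument against.

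That said, your proposal is correct and is essentially the standard proof. The crucial point---that in a rigid (hence pivotal) category the functors $Z\tens(-)$ and $(-)\tens Z$ admit right adjoints and therefore preserve coends---is exactly what is needed to identify $\coend^{\tens n}$ with the iterated coend $\int^{X_1}\cdots\int^{X_n} X_1^{*}\tens X_1\tens\cdots\tens X_n^{*}\tens X_n$. Your device of strengthening the inductive statement by allowing a prepended object $Z$ is a clean way to thread the induction. One small wording issue: the right adjoint of $(-)\tens Z$ is $(-)\tens Z^{*}$, not $Z^{*}\tens(-)$; your sentence is ambiguous on this point but the intended conclusion (preservation of colimits) is unaffected. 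Your final observation that the braiding plays no role is also correct: the hypothesis ``braided'' in the lemma merely reflects the paper's ambient setting.
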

	
	If $\mc B$ is a braided pivotal category, coend $\coend$ of $\mc B$ is a Hopf algebra in $\mc B$ (see \cite{LYUBASHENKO1995279, {MAJID1993187}, {moncatstft}}), which means that the coproduct and the counit are algebra morphisms and that there is an antipode.
	The unit is~$u=(\id_\uu \tens i_{\uu})(\coevl_{\uu} \tens \id_\uu) \co \uu \to \coend$. Multiplication~$m \co \coend \tens \coend \to \coend$ and antipode~$S \co \coend \to \coend$ are unique morphisms such that for all~$X,Y \in \Ob{\mc B}$,
	$$
	\,
	\psfrag{X}[cc][cc][0.75]{$X$}
	\psfrag{Y}[cc][cc][0.75]{$Y$}
	\psfrag{C}[cc][cc][0.75]{$\coend$}
	\psfrag{w}[cc][cc][0.85]{$m$}
	\rsdraw{0.45}{1}{dinmult1}
	\;=~~
	\,
	\psfrag{X}[cc][cc][0.75]{$X$}
	\psfrag{Y}[cc][cc][0.75]{$Y$}
	\psfrag{B}[cc][cc][0.75]{\hspace{0.2cm}$\coend$}
	\psfrag{id}{\scalebox{0.85}{$\id_{Y\tens X}$}}
	\rsdraw{0.45}{1}{dinmult2}
	\;, \qquad
	\,
	\psfrag{X}[cc][cc][0.75]{$X$}
	\psfrag{C}[cc][cc][0.75]{$\coend$}
	\psfrag{g}[cc][cc][0.75]{$S$}
	\rsdraw{0.45}{1}{dinanti1}
	\;
	=
	\,
	\psfrag{X}[cc][cc][0.75]{$X$}
	\psfrag{C}[cc][cc][0.75]{$\coend$}
	\rsdraw{0.45}{1}{dinanti2}
	\;.
	$$
	Note that $S^2= \theta_\coend^r$.
	
	\subsection{Evaluations  of ribbon string links}\label{hopfdiagforms}
	Let $\mc B$ be a ribbon category with a coend $\coend$.
We recall the con\-struc\-tion of the func\-tor
	$$\phi_{\mc B} \co \RSL \to \text{Conv}_{\mc B}(\coend,\uu)$$ from~\cite{hopfdiag}, which is im\-por\-tant in the se\-quel.
	It is identity on objects. Let $n$ be a non-negative integer.
	For an~$n$-string link~$T$, the morphism~$\phi_{\mc B}(T)\co \coend^{\tens n} \to \uu$ is defined as follows. Let~$i=\{i_X\co X^* \tens X \to \coend\}_{X \in \Ob{\mc B}}$ be the universal dinatural transformation associated to the coend $\coend$.
	First, we orient the ribbon~$n$-handle~$\mathfrak{F}(T)$ as prescribed in Section~\ref{ribbon handles}.
	Coloring the $k$-th ribbon of~$\mathfrak{F}(T)$ by an object $X_k$ of $\mc B$, we obtain a family of morphisms $$\mathfrak{F}(T)_{X_1,\dots, X_n}\co X_1^*\tens X_1\tens \cdots\tens  X_n^*\tens X_n\to \uu,$$ which is dinatural in each variable.
	Hence, it factorizes by Lemma \ref{CoendFubini}:
	
	\begin{equation} \label{coendeval} \mathfrak{F}(T)_{X_1,\dots, X_n}=\phi_\mc{B}(T) \circ (i_{X_1}\tens \cdots \tens i_{X_n})\end{equation}
	for a unique morphism $\phi_\mc{B}(T)\co \coend^{\tens n} \to \uu$.
	Note that~$\mathfrak{F}(T)_{X_1,\dots, X_n}$ is the value of the~$\mc B$-colored (as above) ribbon~$n$-handle $\mathfrak{F}(T)$ under the Reshetikhin-Turaev functor (see~\cite[Theorem 2.5]{turaevqinvariants}) and the morphism~$\phi_{{\mc B}}(T)$ is the universal quantum invariant derived from~$\mc B$ of the ribbon~$n$-string link~$T$.
	
	\subsection{Cyclic sets from string links and quantum invariants} Let $\mc B$ be a ribbon cate\-gory with a coend $\coend$. The coend $\coend$ is a Hopf algebra in $\mc B$ and so gives rise to the cocyclic set $\coend^\bullet$ and the cyclic set $\coend_\bullet$ (see Section \ref{algebraiccyclic}). Recall from Section \ref{slrelatedobjects} the cocyclic set $\mc{SL}^\bullet$ and cyclic set $\mc{SL}_\bullet$ defined geometrically via ribbon string links. Consider the evaluation functor~$\phi_{\mc B} \co \RSL \to \text{Conv}_{\mc B}(\coend, \uu)$ from Section \ref{hopfdiagforms}.  The next theorem relates these (co)cyclic sets via quantum invariants.
	\begin{thm} \label{slmain2} The evaluation functor~$\phi_{\mc B}$ induces a morphism of cocyclic sets from~${\mc {SL}}^\bullet$ to~$\coend^\bullet$ and a morphism of cyclic sets from~${\mc {SL}}_\bullet$ to~$\coend_\bullet$.
	\end{thm}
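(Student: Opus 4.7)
The plan is to set $\alpha^n(T) := \phi_{\mc B}(T) \in \Hom_{\mc B}(\coend^{\tens n+1}, \uu) = \coend^n$ for each $T \in \mc{RSL}_n$, and similarly $\alpha_n(T) := \phi_{\mc B}(T) \in \coend_n$. The verification will then consist in checking that $\alpha^\bullet = \{\alpha^n\}_{n\in \N}$ commutes with cofaces, codegeneracies, and cocyclic operators, and analogously that $\alpha_\bullet = \{\alpha_n\}_{n\in \N}$ commutes with faces, degeneracies, and cyclic operators.

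For the cocyclic part, recall from the proof of Lemma~\ref{coalg} that the structure maps on $\coend^\bullet$ are of the form $f \mapsto f \circ d_i^n$, $f \mapsto f \circ s_j^n$, $f \mapsto f \circ t_n$, where $d_i^n$, $s_j^n$, $t_n$ are built out of the comultiplication, counit, braiding, and inverse twist of $\coend$. The three compatibilities thus reduce to the identities
\[
\phi_{\mc B}(\delta_i^n(T)) = \phi_{\mc B}(T) \circ d_i^n, \quad \phi_{\mc B}(\sigma_j^n(T)) = \phi_{\mc B}(T) \circ s_j^n, \quad \phi_{\mc B}(\tau_n(T)) = \phi_{\mc B}(T) \circ t_n
\]
in $\Hom_{\mc B}(\coend^{\tens n+1}, \uu)$. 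For each, my strategy will be to invoke the universal factorization property of the coend (Lemma~\ref{CoendFubini}) together with the defining identity~\eqref{coendeval}, reducing the problem to an equality at the level of $\mc B$-colored ribbon handles evaluated under the Reshetikhin-Turaev functor, after precomposition with $i_{X_0} \tens \cdots \tens i_{X_n}$ for arbitrary $X_0, \dots, X_n \in \Ob{\mc B}$. I will handle the cofaces using the characterizing identity $\varepsilon \circ i_X = \evl_X$, which matches the $\cup$-shape produced at the $i$-th slot in $\mathfrak{F}(\delta_i^n(T))$, and the codegeneracies using the dinatural characterization of the comultiplication, which matches the ``cabling'' of the duplicated strand in $\mathfrak{F}(\sigma_j^n(T))$ by coloring the two halves with $X_j$ and $X_{j+1}$.

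The main obstacle will be the cocyclic operator. Geometrically, $\tau_n(T)$ sends the first ribbon behind the others to the rightmost position, producing over/under crossings whose framing contribution must be tracked carefully; algebraically, $t_n$ braids the first tensorand of $\coend^{\tens n+1}$ past the remaining $n$ and applies the inverse twist $\theta_\coend^{-1}$. To reconcile them, I will colour the rotated handle, use naturality of the braiding in $\mc B$ together with the ribbon-category identity $\theta^l = \theta^r$ to extract an inverse twist on the first strand from the framing induced by the ``behind'' passage, and conclude via the universal factorization property. The cyclic part for $\alpha_\bullet$ will be handled by the same three-step template, applied to the structure maps on $\coend_\bullet$ from Section~\ref{sec-alg}: the counit is replaced by the unit $u$ (matching deletion of a component), the comultiplication by the multiplication $m$ (matching duplication along the framing), and the cyclic operator now uses a braiding and twist in the appropriate direction. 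Since this is the Akrami--Majid-type construction applied to the Hopf algebra $\coend$, the verification is dual but structurally identical to the cocyclic case.
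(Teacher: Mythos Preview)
Your proposal is correct and follows essentially the same route as the paper. Two organizational differences are worth noting: for the cocyclic operator the paper isolates the isotopy computation into two preparatory lemmas (showing that $\mathfrak{F}(\tau_n(T))$ is isotopic to a handle with the cyclic braiding and inverse twist applied \emph{outside} $\mathfrak{F}(T)$), which is exactly the content of your ``extract an inverse twist from the framing'' step; and for the cyclic part the paper reformulates $\phi_{\mc B}(T)$ via the universal coaction $\delta_X = (\id_X \tens i_X)(\coevl_X \tens \id_X)$, which lets the unit and multiplication of $\coend$ appear directly when a strand is deleted or duplicated, rather than working through $\mathfrak{F}$ again. Your ``dual template'' would also work, but the coaction shortcut is slightly cleaner.
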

Theorem \ref{slmain2} says that in a sense, $\mc{SL}^\bullet$ is an initial cocyclic set, which is universal with respect to ribbon categories with a coend. Similarly, $\mc{SL}_\bullet$ is an initial cyclic set, which is universal with respect to ribbon categories with a coend.

	Recall the cyclic (respectively, cocyclic) sets~${\mc {SL}}^\bullet\circ L$ and~$\coend^\bullet \circ L$ (respectively, ${\mc {SL}}_\bullet \circ {L^{\opp}}$ and~$\coend_\bullet \circ L^\opp$) from Sections~\ref{SLL} and~\ref{passing}. An immediate corollary of Theorem~\ref{slmain2} is the following:
	\begin{cor} \label{corslmain2} The evaluation functor~$\phi_{\mc B}$ induces a morphism of cyclic sets from~${\mc {SL}}^\bullet \circ L$ to~$\coend^\bullet \circ L$ and a morphism of cocyclic sets from~${\mc {SL}}_\bullet \circ {L^{\opp}}$ to~$\coend_\bullet \circ L^\opp$.
	\end{cor}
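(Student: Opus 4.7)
The plan is to observe that Corollary~\ref{corslmain2} is a purely formal consequence of Theorem~\ref{slmain2}, exploiting the fact that morphisms of (co)cyclic sets are, by definition, natural transformations between functors out of $\Delta C$ (respectively, $\Delta C^\opp$) to $\Sets$, and that cyclic duality is obtained by precomposition with the functors $L$ and $L^\opp$ introduced in Section~\ref{connes loday dual}.

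Explicitly, Theorem~\ref{slmain2} provides two natural transformations: a morphism of cocyclic sets $\phi_{\mc B}^\bullet \co \mc{SL}^\bullet \to \coend^\bullet$ between functors $\Delta C \to \Sets$, and a morphism of cyclic sets $\phi_{\mc B,\bullet} \co \mc{SL}_\bullet \to \coend_\bullet$ between functors $\Delta C^\opp \to \Sets$. For the first claim of the corollary, I would invoke the elementary categorical fact that the whiskering of a natural transformation with a functor on the source side is again a natural transformation. Applied to $\phi_{\mc B}^\bullet$ and the functor $L \co \Delta C^\opp \to \Delta C$, this produces a natural transformation $\phi_{\mc B}^\bullet \ast L \co \mc{SL}^\bullet \circ L \to \coend^\bullet \circ L$, which is by definition a morphism of cyclic sets. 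For the second claim, the analogous whiskering of $\phi_{\mc B,\bullet}$ with $L^\opp \co \Delta C \to \Delta C^\opp$ yields the required morphism of cocyclic sets $\phi_{\mc B,\bullet} \ast L^\opp \co \mc{SL}_\bullet \circ L^\opp \to \coend_\bullet \circ L^\opp$.

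There is essentially no obstacle: the argument is formal and relies only on the fact that $L$ and $L^\opp$ act identically on objects, so the components of the whiskered natural transformations at an integer $n$ coincide with the original components $\phi_{\mc B}^n$ and $\phi_{\mc B,n}$. If one wished to verify the corollary componentwise, it would suffice to check that $\phi_{\mc B}^n$ intertwines the faces $\tilde{d}_i^n$, degeneracies $\tilde{s}_j^n$ and cyclic operators $\tilde{t}_n$ of Section~\ref{SLL} with the corresponding dual operators defined in Section~\ref{passing}; but since each of these is built from the original (co)faces, (co)degeneracies and (co)cyclic operators via the explicit formulas for $L$ and $L^\opp$, compatibility is automatic from Theorem~\ref{slmain2}.
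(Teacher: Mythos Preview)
Your argument is correct and matches the paper's treatment: the paper gives no separate proof of Corollary~\ref{corslmain2}, stating only that it is an immediate consequence of Theorem~\ref{slmain2}, and your whiskering argument is precisely the formal justification of that immediacy.
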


	\subsection{Proof of Theorem \ref{slmain2}} \label{pfslmain2}
We first prove two lemmas.
	
	\begin{lem}\label{lemcyclic} Let $n\in \N^*$. For any $(n+1)$-ribbon handle $H$, the ribbon handles \[\,
		\psfrag{b}[cc][cc][0.75]{$0$}
		\psfrag{bn}[cc][cc][0.75]{\hspace{-0.2cm}$1$}
		\psfrag{er}[cc][cc][0.75]{\hspace{0.53cm}$n-1$}
		\psfrag{e}[cc][cc][0.75]{$n$}
		\psfrag{H}[cc][cc][0.85]{\hspace{0.35cm}$H$}
		\rsdraw{0.55}{1}{cyclicRH}\; \quad \text{and} \quad \hspace{0.35cm} \,
		\psfrag{b}[cc][cc][0.75]{$0$}
		\psfrag{bn}[cc][cc][0.75]{\hspace{-0.2cm}$1$}
		\psfrag{er}[cc][cc][0.75]{\hspace{0.45cm}$n-1$}
		\psfrag{e}[cc][cc][0.75]{$n$}
		\psfrag{H}[cc][cc][0.85]{\hspace{0.35cm}$H$}
		\rsdraw{0.55}{1}{cyclicRH7}\;\] are isotopic.
	\end{lem}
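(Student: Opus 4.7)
The plan is to exhibit an explicit ambient isotopy in $\R^2\times[0,1]$ carrying the first ribbon handle into the second, keeping the boundary fixed and preserving the orientation and the decomposition into ribbons. The key feature to exploit is the definition of a ribbon handle: since $H\cap(\R^2\times\{1\})=\emptyset$, there is free space above $H$ in which arcs can be rearranged without obstruction.

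First, I would pass to the planar diagrammatic presentation (using the blackboard framing convention from Section~\ref{ribbon handles}) and describe carefully the two diagrams: both consist of the diagram of $H$ together with an auxiliary ``cyclic'' strand that transports the $0$-th base around to the rightmost position. The two diagrams differ only in the routing of this auxiliary strand, which in one case passes behind/around $H$ on one side and in the other case on the opposite side. Since isotopy of ribbon handles modulo ribbon Reidemeister moves classifies them up to ambient isotopy (as in Section~\ref{sldef}), it suffices to relate the two diagrams by planar isotopy and ribbon Reidemeister moves \Reid{1}--\Reid{3}.

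Second, I would construct the isotopy in stages: (i) lift the auxiliary strand into the empty region above $H$; (ii) slide it over the top of the handle from one side to the other, which at the diagrammatic level amounts to a sequence of \Reid{3} moves pushing crossings across the ribbons of $H$ one at a time, combined with \Reid{2} moves to cancel introduced bigons; (iii) bring the strand back down into its final position. This is structurally analogous to the isotopies used in Section~\ref{proofslmain} (e.g.\ the verification of \eqref{compcoccof} and of \eqref{cocyclicity}, where the cyclic operator was manipulated via exactly such moves).

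The main obstacle I anticipate is bookkeeping of the framing: naive sliding of a ribbon over another can introduce spurious left- or right-handed twists, which would spoil the isotopy class. To control this, I would choose the isotopy so that the auxiliary strand always stays in a tubular neighborhood parallel to the blackboard, introducing only \emph{pairs} of cancelling twists where necessary (so that a \Reid{1}-type correction is not actually needed in the ribbon setting). Once the framing is handled, the rest of the argument is a routine reading off of \Reid{2} and \Reid{3} moves, completely parallel to the explicit diagrammatic computations already carried out in Section~\ref{proofslmain}.
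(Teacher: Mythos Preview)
Your plan is correct and matches the paper's own proof, which consists precisely of a chain of diagrams related by planar isotopy and Reidemeister moves, sliding the auxiliary strand over the top of $H$ into the free region above it. Your framing concern is well-placed caution but turns out to be harmless here: the slide can be arranged so that no net twist is introduced, and the paper's proof uses only isotopy and \Reid{2}/\Reid{3}-type moves without any \Reid{1} correction.
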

	
	\begin{proof} By isotopy and Reidemeister moves, we have
		\begingroup
		\allowdisplaybreaks
		\begin{align*}
		&\psfrag{b}[cc][cc][0.75]{$0$}
		\psfrag{bn}[cc][cc][0.75]{\hspace{-0.2cm}$1$}
		\psfrag{er}[cc][cc][0.75]{\hspace{0.53cm}$n-1$}
		\psfrag{e}[cc][cc][0.75]{$n$}
		\psfrag{H}[cc][cc][0.85]{\hspace{0.35cm}$H$}
		\rsdraw{0.55}{1}{cyclicRH}\;=\,
		\psfrag{b}[cc][cc][0.75]{$0$}
		\psfrag{bn}[cc][cc][0.75]{\hspace{-0.2cm}$1$}
		\psfrag{er}[cc][cc][0.75]{\hspace{0.53cm}$n-1$}
		\psfrag{e}[cc][cc][0.75]{$n$}
		\psfrag{H}[cc][cc][0.85]{\hspace{0.35cm}$H$}
		\rsdraw{0.55}{1}{cyclicRH2}\;=\,
		\psfrag{b}[cc][cc][0.75]{$0$}
		\psfrag{bn}[cc][cc][0.75]{\hspace{-0.2cm}$1$}
		\psfrag{er}[cc][cc][0.75]{\hspace{0.53cm}$n-1$}
		\psfrag{e}[cc][cc][0.75]{$n$}
		\psfrag{H}[cc][cc][0.85]{\hspace{0.35cm}$H$}
		\rsdraw{0.55}{1}{cyclicRH3}\;\\
		=&\,
		\psfrag{b}[cc][cc][0.75]{$0$}
		\psfrag{bn}[cc][cc][0.75]{\hspace{-0.2cm}$1$}
		\psfrag{er}[cc][cc][0.75]{\hspace{0.53cm}$n-1$}
		\psfrag{e}[cc][cc][0.75]{$n$}
		\psfrag{H}[cc][cc][0.85]{\hspace{0.35cm}$H$}
		\rsdraw{0.55}{1}{cyclicRH4}\;=\,
		\psfrag{b}[cc][cc][0.75]{$0$}
		\psfrag{bn}[cc][cc][0.75]{\hspace{-0.2cm}$1$}
		\psfrag{er}[cc][cc][0.75]{\hspace{0.53cm}$n-1$}
		\psfrag{e}[cc][cc][0.75]{$n$}
		\psfrag{H}[cc][cc][0.85]{\hspace{0.35cm}$H$}
		\rsdraw{0.55}{1}{cyclicRH5}\;	=	
		\,
		\psfrag{b}[cc][cc][0.75]{$0$}
		\psfrag{bn}[cc][cc][0.75]{\hspace{-0.2cm}$1$}
		\psfrag{er}[cc][cc][0.75]{\hspace{0.53cm}$n-1$}
		\psfrag{e}[cc][cc][0.75]{$n$}
		\psfrag{H}[cc][cc][0.85]{\hspace{0.35cm}$H$}
		\rsdraw{0.55}{1}{cyclicRH7}\;.
		\end{align*}
		\endgroup
	\end{proof}

Recall the isomorphism $\mathfrak{F} \co \textbf{RSL} \to \textbf{RH}$ from Section \ref{ribbon handles}.
	
	\begin{lem}\label{lemprepa} Let $n\in  \N^*$.  For any ribbon $(n+1)$-string link $T$, the ribbon handle $\mathfrak{F}(\mc{SL}^\bullet(\tau_n)(T))$ is isotopic to the ribbon handle \[\,
		\psfrag{b}[cc][cc][0.75]{\hspace{-0.1cm}$0$}
		\psfrag{er}[cc][cc][0.75]{\hspace{0.65cm}$n-1$}
		\psfrag{e}[cc][cc][0.75]{\hspace{0.3cm}$n$}
		\psfrag{H}[cc][cc][0.85]{\hspace{1cm}$\mathfrak{F}(T)$}
		\rsdraw{0.45}{1}{cyclicRH7v2}\;
		.\]
	\end{lem}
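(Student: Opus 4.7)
The plan is to unfold the two definitions involved and reduce to the previous lemma. By the definition of the cocyclic operator $\tau_n$ on $\mc{SL}^\bullet$ given in Section~\ref{slrelatedobjects}, the ribbon $(n+1)$-string link $\mc{SL}^\bullet(\tau_n)(T)$ is obtained from $T$ by taking its rightmost component and wrapping it behind the remaining $n$ components down to the leftmost position (relabeled $0$). Applying the functor $\mathfrak{F}\co \textbf{RSL}\to \textbf{RH}$ from Section~\ref{ribbon handles}, we convert this string link into a ribbon handle by adding, at the top, the standard small arcs that join the bottom endpoints in consecutive pairs so as to produce the top bases of the ribbons of the handle.

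First, I would substitute the picture for $\tau_n$ into the picture for $\mathfrak{F}$ and draw $\mathfrak{F}(\mc{SL}^\bullet(\tau_n)(T))$ explicitly. Next, I would apply planar isotopies, pulling the cap that $\mathfrak{F}$ places over the ``wrapped'' $0$-th strand so that all the wrapping occurs outside (below and to the left of) the block containing $T$ together with its other caps. By the definition of $\mathfrak{F}$, that inner block is precisely $\mathfrak{F}(T)$, so the resulting diagram is recognisable as the configuration on the left-hand side of Lemma~\ref{lemcyclic} with $H=\mathfrak{F}(T)$.

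Applying Lemma~\ref{lemcyclic} then replaces this configuration by the one on the right, which is exactly the ribbon handle in the statement of Lemma~\ref{lemprepa}. This finishes the proof.

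The main obstacle is the diagrammatic bookkeeping in the second step: one has to check that the single strand introduced by the cyclic operator $\tau_n$, together with the cap that $\mathfrak{F}$ attaches to it and the caps over the other $n$ strands, really do assemble (after planar isotopy and possibly a few \Reid{2}/\Reid{3} moves) into the left-hand picture of Lemma~\ref{lemcyclic}, with no extraneous crossing, twist, or misalignment of labels $0,1,\dots,n$. Once that identification is made, everything else is formal.
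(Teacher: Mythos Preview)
Your approach is valid but differs from the paper's. The paper proves Lemma~\ref{lemprepa} by a direct diagrammatic computation, written out for $n=1$ (with the general case declared similar): it expands $\mathfrak{F}(\tau_1(T))$ and applies three isotopy steps to reach the target picture, never invoking Lemma~\ref{lemcyclic}. That lemma is only used afterwards, in the proof of Theorem~\ref{slmain2}, to pass from the output of Lemma~\ref{lemprepa} to the form needed for the algebraic comparison.

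Your route---isotope $\mathfrak{F}(\tau_n(T))$ to the \emph{left}-hand configuration of Lemma~\ref{lemcyclic} with $H=\mathfrak{F}(T)$, then apply that lemma---is logically sound, but notice what it implies: the left-hand configuration you pass through is exactly the form the paper ultimately wants in Theorem~\ref{slmain2} (it gets there by chaining Lemma~\ref{lemprepa} with Lemma~\ref{lemcyclic}). So if your ``planar isotopy'' step is really as short as you suggest, you would be establishing directly what the paper obtains only by combining both lemmas, and your subsequent appeal to Lemma~\ref{lemcyclic} just undoes and redoes that passage. The paper's organisation suggests that the right-hand form is the one reached more naturally from $\mathfrak{F}(\tau_n(T))$; the isotopy you describe to the left-hand form likely carries complexity comparable to the moves inside Lemma~\ref{lemcyclic} itself. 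Either way the argument goes through, but the ``main obstacle'' you flag is essentially the whole proof, and the detour through Lemma~\ref{lemcyclic} does not shorten it.
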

	
	\begin{proof}
	For any $T\in \mc{RSL}_1$, we have
		\[\mathfrak{F}(\mc{SL}_\bullet(\tau_1)(T))=\,
		\psfrag{b}[cc][cc][0.75]{$0$}
		\psfrag{e}[cc][cc][0.75]{$1$}
		\psfrag{T}[cc][cc][0.85]{$T$}
		\rsdraw{0.45}{1}{RHcyc}\;=\,
		\psfrag{b}[cc][cc][0.75]{$0$}
		\psfrag{e}[cc][cc][0.75]{$1$}
		\psfrag{T}[cc][cc][0.85]{$T$}
		\rsdraw{0.45}{1}{RHcyc2}\;=\,
		\psfrag{b}[cc][cc][0.75]{$0$}
		\psfrag{e}[cc][cc][0.75]{$1$}
		\psfrag{T}[cc][cc][0.85]{$T$}
		\rsdraw{0.45}{1}{RHcyc3}\;=\,
		\psfrag{b}[cc][cc][0.75]{$0$}
		\psfrag{e}[cc][cc][0.75]{\hspace{0.3cm}$1$}
		\psfrag{H}[cc][cc][0.85]{\hspace{0.75cm}$\mathfrak{F}(T)$}
		\rsdraw{0.45}{1}{RHcyc4}\;.\]
	This proves the lemma for $n=1$. The general case is similar and is left to the reader.	
		
	\end{proof}

Let us prove Theorem \ref{slmain2}.
Since~$\phi_\mc{B}$ is a functor, it induces for any~$n\in \N$ the map~\[\phi^n \co \End_{\RSL}(n+1) \to \End_{\text{Conv}_{\mc B}(\coend,\uu)}(n+1).\]
This defines a family of maps~$\phi^\bullet=\{\phi^n\co \mc{RSL}_n \to \Hom_{\mc B}(\coend^{\tens n+1}, \uu)\}_{n\in \N}.$
To show that~$\phi^\bullet$ defines a morphism of cocyclic sets from~$\mc{SL}^\bullet$ to~$\coend^\bullet$, we have to verify that
\begin{align}
\label{commfacsl}\phi^n(\delta_i^n(T))&= \delta_i^n (\phi^{n-1}(T)) \quad \text{for all}  \quad n\ge 1, 0\le i \le n, \quad \text{and} \quad T\in {\mc {RSL}}_{n-1},  \\
\label{commdegsl}\phi^n(\sigma_j^n(T))&= \sigma_j^n (\phi^{n+1}(T)) \quad \text{for all} \quad n\ge 0, 0\le j \le n,  \quad \text{and} \quad T\in {\mc {RSL}}_{n+1}, \\
\label{commcycsl}\phi^n(\tau_n(T)) &= \tau_n(\phi^n(T))  \quad \text{for all} \quad n\ge 0 \quad \text{and} \quad T\in {\mc{RSL}}_n.
\end{align}
By abuse, we use here the same notation for cofaces, codegeneracies, and cocyclic operators of~$\mc{SL}^\bullet$ and~$\coend^\bullet$.
	
	Let us prove~\eqref{commfacsl}. Assume that $1 \le i \le n-1$ and~$T \in \mc{RSL}_{n-1}$. We have:
	\[
	\mathfrak{F}(\delta_i^n(T))=\mathfrak{F}\left(
	\,
	\psfrag{b}[cc][cc][0.75]{$1$}
	\psfrag{i}[cc][cc][0.75]{\hspace{0.3cm}$i$}
	\psfrag{i+1}[cc][cc][0.75]{\hspace{0.4cm}$i+1$}
	\psfrag{e}[cc][cc][0.75]{$n$}
	\psfrag{T}[cc][cc][0.85]{$T$}
	\rsdraw{0.55}{1}{SLdeltai}\;
	\right)=\,
	\psfrag{b}[cc][cc][0.75]{$1$}
	\psfrag{i}[cc][cc][0.75]{\hspace{0.2cm}$i$}
	\psfrag{i+1}[cc][cc][0.75]{\hspace{0.4cm}$i+1$}
	\psfrag{e}[cc][cc][0.75]{$n$}
	\psfrag{T}[cc][cc][0.85]{\hspace{0.9cm}$\mathfrak{F}(T)$}
	\rsdraw{0.55}{1}{RHdi}\;.
	\]
	Consequently, by equation~\eqref{coendeval} and by definition of the counit of coend of $\mc B$ (see Section~\ref{coendprelimini}),
	\[\phi^n(\delta_i^n(T))=\,
	\psfrag{b}[cc][cc][0.75]{$1$}
	\psfrag{i}[cc][cc][0.75]{\hspace{0.2cm}$i$}
	\psfrag{i+1}[cc][cc][0.75]{\hspace{0.4cm}$i+1$}
	\psfrag{e}[cc][cc][0.75]{$n$}
	\psfrag{T}[cc][cc][0.85]{\hspace{0.8cm}$\phi^{n-1}(T)$}
	\rsdraw{0.55}{1}{HDdi}\;=\delta_i^n(\phi^{n-1}(T)).\]  The cases when $n\ge 1$ and $i=0$ or $i=n$ are verified analogously.

	Next, let us prove \eqref{commdegsl}.
	Let $n\ge 0, 0\le j \le n$ and $T\in \mc{RSL}_{n+1}$.
	 We have:
	\[\mathfrak{F}(\sigma_j^n(T))=\mathfrak{F}\left(\,
	\psfrag{b}[cc][cc][0.75]{$0$}
	\psfrag{j}[cc][cc][0.75]{\hspace{0.45cm}$j$}
	\psfrag{j+1}[cc][cc][0.75]{$j+1$}
	\psfrag{e}[cc][cc][0.75]{$n+1$}
	\psfrag{T}[cc][cc][0.85]{$T$}
	\rsdraw{0.45}{1}{SLsigmajv2}\;\right)
	\;=\,
	\psfrag{b}[cc][cc][0.75]{$0$}
	\psfrag{j}[cc][cc][0.75]{$j$}
	\psfrag{e}[cc][cc][0.75]{\hspace{0.6cm}$n+1$}
	\psfrag{T}[cc][cc][0.85]{$T$}
	\rsdraw{0.5}{1}{RHsj2}=\,
	\psfrag{b}[cc][cc][0.75]{$0$}
	\psfrag{j}[cc][cc][0.75]{\hspace{0.2cm}$j$}
	\psfrag{e}[cc][cc][0.75]{$n+1$}
	\psfrag{T}[cc][cc][0.85]{\hspace{0.35cm}$\mathfrak{F}(T)$}
	\rsdraw{0.5}{1}{RHsj}
	\;.
	\]
	Consequently, by equation~\eqref{coendeval} and by definition of the comultiplication of coend of~$\mc B$ (see Section~\ref{coendprelimini}),
	\[\phi^n(\sigma_j^n(T))=
	\,
	\psfrag{b}[cc][cc][0.75]{\hspace{-0.1cm}$0$}
	\psfrag{j}[cc][cc][0.75]{$j$}
	\psfrag{e}[cc][cc][0.75]{\hspace{-0.1cm}$n+1$}
	\psfrag{T}[cc][cc][0.85]{\hspace{1.5cm}$\phi^{n+1}(T)$}
	\rsdraw{0.55}{1}{HDsj}\;=\sigma_j^n(\phi^{n+1}(T)).\]
	
	Finally, let us prove~\eqref{commcycsl}. Let~$n\ge 0$ and~$T\in \mc{RSL}_n$. If~$n=0$, then by definition, we have for any~$1$-string link~$T$,~$\phi^0(\tau_0(T))= \phi^0(T) = \tau_0(\phi^0(T)).$
	Assume that~$n\ge 1$. We have
	\[\mathfrak{F}(\tau_n(T))\overset{(i)}{=}\,
	\psfrag{b}[cc][cc][0.75]{$0$}
	\psfrag{bn}[cc][cc][0.75]{\hspace{-0.3cm}$1$}
	\psfrag{er}[cc][cc][0.75]{\hspace{0.35cm}$n-1$}
	\psfrag{e}[cc][cc][0.75]{\hspace{0.35cm}$n$}
	\psfrag{H}[cc][cc][0.85]{\hspace{0.85cm}$\mathfrak{F}(T)$}
	\rsdraw{0.45}{1}{cyclicRH7v2}\;\overset{(ii)}{=}\,
	\psfrag{b}[cc][cc][0.75]{$0$}
	\psfrag{bn}[cc][cc][0.75]{\hspace{-0.3cm}$1$}
	\psfrag{er}[cc][cc][0.75]{\hspace{0.35cm}$n-1$}
	\psfrag{e}[cc][cc][0.75]{$n$}
	\psfrag{H}[cc][cc][0.85]{\hspace{0.55cm}$\mathfrak{F}(T)$}
	\rsdraw{0.45}{1}{cyclicRHv2}\;.\]
	Here $(i)$ follows by Lemma \ref{lemprepa} and $(ii)$ follows by Lemma \ref{lemcyclic}.
	Consequently,  by equation \eqref{coendeval} and by naturality of the twist, we have
	\[\phi^n(\tau_n(T))=\,
	\psfrag{b}[cc][cc][0.75]{$0$}
	\psfrag{bn}[cc][cc][0.75]{\hspace{-0.4cm}$1$}
	\psfrag{er}[cc][cc][0.75]{\hspace{0.35cm}$n-1$}
	\psfrag{e}[cc][cc][0.75]{$n$}
	\psfrag{T}[cc][cc][0.85]{$\phi^n(T)$}
	\rsdraw{0.45}{1}{HDtn}\;=\tau_n(\phi^n(T)).\]
	This completes the proof of the fact that the evaluation~$\phi_\mc{B}$ induces a morphism of cocyclic sets from~$\mc{SL}^\bullet$ to~$\coend^\bullet$.
	
	Let us prove that that~$\phi_\mc{B}$ induces a morphism between cyclic sets~$\mc{SL}_\bullet$ and~$\coend_\bullet$.
	As above, the functor~$\phi_{\mc B}\co \RSL \to \text{Conv}_{\mc B}(\coend,\uu)$ induces for any $n\in \N$ the map
	$$\phi_n\co \End_{\RSL}(n+1) \to \End_{\text{Conv}_{\mc B}(\coend,\uu)} (n+1).$$
This defines a family of maps~$\phi_{\bullet}=\{\phi_{n} \co \mc {RSL}_n \to \Hom_{\mc B}(\coend^{\tens n+1}, \uu)\}_{n\in \N}$. Recall that for any~$n\in \N$,
	$$\mc {SL}_n=\mc {RSL}_n \quad \text{and} \quad \coend_n=\coend^n=\Hom_{\mc B}(\coend^{\tens n+1}, \uu) \quad \text{as sets}.$$
	To show that $\phi_{\bullet}$ defines a morphism of cyclic sets from $\mc{SL}_\bullet$ to $\coend_\bullet$, we have to verify that
	\begin{align}
	\label{commfacsl2}\phi_{n-1}(d_i^n(T))&= d_i^n (\phi_{n}(T)) \quad \text{for all} \quad n\ge 1, 0\le i \le n,  \quad \text{and} \quad T\in {\mc {RSL}}_{n},  \\
	\label{commdegsl2}\phi_{n+1}(s_j^n(T))&= s_j^n (\phi_{n}(T)) \quad \text{for all} \quad n\ge 0, 0\le j \le n,  \quad \text{and} \quad T\in {\mc {RSL}}_{n}, \\
	\label{commcycsl2}\phi_n(t_n(T)) &= t_n(\phi_n(T))  \quad \text{for all} \quad n\ge 0 \quad \text{and} \quad T\in {\mc{RSL}}_n.
	\end{align}
	
	To prove \eqref{commfacsl2} and \eqref{commdegsl2}, we use the fact that the coend $\coend$ of $\mc B$ coacts on each object $X$ of $\mc B$ via the universal coaction $\delta_X\co X\to X\tens \coend$, defined by $\delta_X= (\id_X\tens i_X)(\coevl_X \tens \id_X)$. We depict $\delta_X$ by
	$$\delta_X = \,
	\psfrag{X}[cc][cc][0.75]{$X$}
	\psfrag{C}[cc][cc][0.75]{$\coend$}
	\rsdraw{0.45}{1}{unicoact} \; , \quad  \text{ so by definition} \quad \,
	\psfrag{X}[cc][cc][0.75]{$X$}
	\psfrag{C}[cc][cc][0.75]{$\coend$}
	\rsdraw{0.45}{1}{unicoact}\; = \,
	\psfrag{X}[cc][cc][0.75]{$X$}
	\psfrag{C}[cc][cc][0.75]{\hspace{-0.1cm}$\coend$}
	\rsdraw{0.45}{1}{unicoact2}\;.$$
	Next, by definition of unit~$u$ of coend~$\coend$ (see Section \ref{coendprelimini}), we have $u=\delta_\uu$. Similarly, the multiplication $m$ of coend $\coend$ is characterized by the following universal property: for all~$X,Y \in \Ob{\mc B}$,
	$$\delta_X = \,
	\psfrag{X}[cc][cc][0.75]{$X$}
		\psfrag{Y}[cc][cc][0.75]{$Y$}
		\psfrag{g}[cc][cc][0.85]{\hspace{0.1cm}$m$}
	\psfrag{C}[cc][cc][0.75]{$\coend$}
	\rsdraw{0.45}{1}{unimult} \hspace{0.1cm} = \,
	\psfrag{X}{$X$}
		\psfrag{XY}[cc][cc][0.75]{\hspace{0.55cm}$X\tens Y$}
	\psfrag{C}[cc][cc][0.75]{$\coend$}
	\rsdraw{0.45}{1}{unimult1} \hspace{0.2cm}.$$

Let us express the universal isotopy invariant $\phi_{B}(T)$ of an $n$-string link $T$ in terms of the universal coaction. By coloring the $k$-th ribbon of $T$ by an object $X_k$ of $\mc B$, we obtain a family of morphisms $$T_{X_1,\cdots,X_n} \co X_1 \tens  \cdots \tens X_n \to X_1 \tens  \cdots \tens X_n,$$ which is natural in each variable. It factorizes as follows:
	\begin{equation}
	T_{X_1,\dots,X_n}\overset{(i)}{=} \,
	\psfrag{m}[cc][cc][0.75]{$X_1$}
	\psfrag{n}[cc][cc][0.75]{$X_2$}
	\psfrag{e}[cc][cc][0.75]{\hspace{-0.1cm}$X_n$}
	\psfrag{T}[cc][cc][0.85]{$\mathfrak{F}(T)_{X_1,\dots,X_n}$}
	\rsdraw{0.45}{1}{RHtoSL1}\;\overset{(ii)}{=} \,
	\psfrag{m}[cc][cc][0.75]{$X_1$}
	\psfrag{n}[cc][cc][0.75]{$X_2$}
	\psfrag{e}[cc][cc][0.75]{\hspace{-0.1cm}$X_n$}
	\psfrag{T}[cc][cc][0.85]{$\phi_{\mc B}(T)$}
	\rsdraw{0.45}{1}{RHtoSL2}\;.
	\end{equation}
Here $(i)$ follows by definition of mutually inverse functors $\mathfrak{F}$ and $\mathfrak{G}$ (see Section \ref{ribbon handles}) and by isotopy invariance of graphical calculus, $(ii)$ follows by equation \eqref{coendeval}, naturality of the braiding, and definition of the universal coaction.
	
	Let us prove~\eqref{commfacsl2}. Assume that~$1\le i \le n-1$, and~$T \in \mc {RSL}_n$. For all objects~$X_0, \dots, X_{i-1}, X_{i+1}, \dots, X_{n}\in \Ob{\mc B}$,
	$$\,
	\psfrag{m}[cc][cc][0.75]{$X_0$}
	\psfrag{o}[cc][cc][0.75]{{$X_{i+1}$}}
	\psfrag{n}[cc][cc][0.75]{\hspace{0.45cm}{$X_{i-1}$}}
	\psfrag{e}[cc][cc][0.75]{\hspace{0.1cm}$X_{n}$}
	\psfrag{T}[cc][cc][0.85]{$\phi_{n}(T)$}
	\rsdraw{0.45}{1}{diTeval}\;= \,
	\psfrag{m}[cc][cc][0.75]{$X_1$}
	\psfrag{n}[cc][cc][0.75]{\hspace{0.45cm}{$X_{i-1}$}}
	\psfrag{o}[cc][cc][0.75]{{$X_{i+1}$}}
	\psfrag{1}[cc][cc][0.75]{$\uu$}
	\psfrag{e}[cc][cc][0.75]{$X_{n}$}
	\psfrag{T}[cc][cc][0.85]{$\phi_{n}(T)$}
	\rsdraw{0.45}{1}{diTeval2}\;.$$
	Hence
	$$\phi_{n-1}(d_i^n(T)) = d_i^n(\phi_n(T)).$$
	The cases when $n\ge 1$ and $i=0$ or $i=n$ are verified analogously.
	
	Let us prove \eqref{commdegsl2}. Let $n\ge 0$, $0\le j \le n$, and $T \in \mc {RSL}_n$. For all~$X_0, \dots, X_{n+1}\in \Ob{\mc B}$,
	$$\,
	\psfrag{m}[cc][cc][0.75]{$X_0$}
	\psfrag{n}[cc][cc][0.75]{{$X_j$}}
	\psfrag{o}[cc][cc][0.75]{{$X_{j+1}$}}
	\psfrag{e}[cc][cc][0.75]{$X_{n+1}$}
	\psfrag{T}[cc][cc][0.85]{$\phi_{n}(T)$}
	\rsdraw{0.45}{1}{sjTeval1}\; = \,
	\psfrag{m}[cc][cc][0.75]{$X_0$}
	\psfrag{n}[cc][cc][0.75]{\hspace{0.5cm}\scalebox{0.6}{$X_j\tens X_{j+1}$}}
	\psfrag{e}[cc][cc][0.75]{$X_{n+1}$}
	\psfrag{T}[cc][cc][0.85]{$\phi_n(T)$}
	\rsdraw{0.45}{1}{sjTeval}\;$$
	Hence
	$$\phi_{n+1}(s_j^n(T)) = s_j^n(\phi_n(T)).$$
		
	Note that~\eqref{commcycsl2} follows from the equation~\eqref{commcycsl} combined with the fact that the cyclic operators of $\mc {SL}_\bullet$ are inverse to cocyclic operators of $\mc {SL}^\bullet$ and the fact that for any $n\in \N$, $\phi_n = \phi^n$ as functions.
	This completes the proof of the fact that the evaluation~$\phi_\mc{B}$ induces a morphism of cyclic sets from~$\mc{SL}_\bullet$ to~$\coend_\bullet$.

	\bibliographystyle{abbrv}
	\bibliography{biblionart2}
	
\end{document}